\newtheorem{sat}{Theorem}[section]		
\newtheorem{lem}[sat]{Lemma}
\newtheorem{kor}[sat]{Corollary}			
\newtheorem{prop}[sat]{Proposition}
\newtheorem*{defi*}{Definition}			
\newtheorem*{bei*}{Example}
\newtheorem*{sat*}{Theorem}				
\newtheorem*{kor*}{Corollary}
\newtheorem*{rmk*}{Remark}				
\newtheorem*{quest*}{Question}
\let\ssection=\section
\renewcommand{\section}{\setcounter{equation}{0}\ssection}
\newtheorem*{namedtheorem}{\theoremname}
\newcommand{\theoremname}{testing}
\newenvironment{named}[1]{\renewcommand{\theoremname}{#1}\begin{namedtheorem}}{\end{namedtheorem}}
\newtheorem*{bem}{Remark}
\theoremstyle{remark}
\newtheorem*{namedtheoremr}{\theoremnamer}
\newcommand{\theoremnamer}{testing}
\newcommand{\BR}{\mathbb R}			
\newcommand{\BN}{\mathbb N}			
			\newcommand{\BZ}{\mathbb Z}
\newcommand{\CC}{\mathcal C}		
\newcommand{\CG}{\mathcal G}		
		\newcommand{\CL}{\mathcal L}
\newcommand{\CM}{\mathcal M}
\newcommand{\CS}{\mathcal S}		\newcommand{\CT}{\mathcal T}
\newcommand{\actson}{\curvearrowright}
\newcommand{\D}{\partial}
\DeclareMathOperator{\vol}{vol}		
\DeclareMathOperator{\Map}{Map}
\newcommand{\comment}[1]{}
\DeclareMathOperator{\Homeo}{Homeo}
\DeclareMathOperator{\Lip}{Lip}
\DeclareMathOperator{\Thu}{Thu}
\DeclareMathOperator{\supp}{supp}
\DeclareMathOperator{\I}{i}
\DeclareMathOperator{\WP}{wp}
\newcommand{\param}{{\mathchoice{\mkern1mu\mbox{\raise2.2pt\hbox{$
\centerdot$}}
\mkern1mu}{\mkern1mu\mbox{\raise2.2pt\hbox{$\centerdot$}}\mkern1mu}{
\mkern1.5mu\centerdot\mkern1.5mu}{\mkern1.5mu\centerdot\mkern1.5mu}}}
\newcommand{\Teich}{{Teichm\"uller }} 
\newcommand{\from}{\colon\thinspace}
\newcommand{\fsubd}{\mathrel{{\scriptstyle\searrow}\kern-1ex^d\kern0.5ex}}
\newcommand{\bsubd}{\mathrel{{\scriptstyle\swarrow}\kern-1.6ex^d\kern0.8ex}}
\newcommand{\ML}{{\CM\CL}}
\newcommand{\Measure}{\mathfrak{M}}
\newcommand{\bSigma}{{\overline \Sigma}}
\newcommand{\st}{{\,\big\vert\,}}
\newcommand{\m}{{\bf m}}
\begin{document}

\title[Currents and Counting]{Geodesics Currents and Counting Problems}
\author{Kasra Rafi}
\address{Department of Mathematics, University of Toronto}
\email{rafi@math.toronto.edu}
\author{Juan Souto}
\address{IRMAR, Universit\'e de Rennes 1}
\email{juan.souto@univ-rennes1.fr}

\thanks{This material is based upon work supported by the National Science Foundation under Grant No. DMS-1440140 while the authors were in residence at the Mathematical Sciences Research Institute in Berkeley, California, during the Fall 2016 semester.
The first author was also partially supported by NSERC Discovery grant, RGPIN 435885.}

\begin{abstract}
For every positive, continuous and homogeneous function $f$ on the space of currents on 
a compact surface $\bSigma$, and for every compactly supported filling current $\alpha$, 
we compute as $L \to \infty$, the number of mapping classes $\phi$ so that 
$f(\phi(\alpha))\leq L$. As an application, when the surface in question is closed, 
we prove a lattice counting theorem for \Teich space equipped with the Thurston metric. 
\end{abstract}
\maketitle

\section{Introduction}
%
%
%
Let $\bSigma$ be a compact orientable surface of negative Euler-characteristic and denote by $\CC=\CC(\bSigma)$ the associated space of currents, endowed it with the weak-*-topology (see \S \ref{Sec:Prelim}). The mapping class group 
$$\Map(\Sigma)=\Homeo_+(\Sigma)/\Homeo_0(\Sigma)=\Homeo_+(\bSigma)/\Homeo_0(\bSigma)$$ 
of the interior $\Sigma=\bSigma\setminus\D\bSigma$  of $\bSigma$ acts on $\CC$ by homeomorphism and the embedding $\ML(\Sigma)\to\CC(\bSigma)$ of the space of measured laminations into $\CC$ is mapping class group equivariant. In particular we can push forward the Thurston measure $\mu_{\Thu}$ on $\ML=\ML(\Sigma)$ to a mapping class group invariant measure on $\CC$.

For a continuous and homogeneous map $f \from \CC \to \BR_+$, define
\[
m(f) = \mu_{\Thu} \big( \big\{ \lambda \in \ML  \st f(\lambda) \leq 1 \big\}\big). 
\] 
We will be only interested in functions $f$ which are positive on the subset $\ML$. For such functions, the set $\{ \lambda \in \ML  \st f(\lambda) \leq 1\}$ is compact and thus $m(f)<\infty$. An important instance of such function is $f_\alpha(\param) = \I(\alpha, \param)$
where $\I \from \CC \times \CC \to \BR$ is the intersection pairing and 
$\alpha$ is a \emph{filling} current 
(ensuring $f_\alpha$ is positive on $\ML$). Relaxing notation, we write for a filling current $\alpha$
\[
m(\alpha)=\mu_{\Thu}\left(\big\{\lambda\in\CM\CL \st 
  \I(\alpha,\lambda)\le 1\big\}\right)
\] 
instead of $m(f_\alpha)$. Other important examples of positive, homogenous, and continuous functions on the space of currents arise as extensions of lengths functions, see \cite{Hugo}. In the particular case that the length function comes from a point $X\in\CT(\Sigma)$ in Teichm\"uller space then we set
 $$m(X)=\mu_{\Thu}\left(\big\{\lambda\in\CM\CL \st \ell_X(\lambda)\le 1\big\}\right).$$
The function $X\to m(X)$ is invariant under the action of the mapping class group on Teichm\"uller space and thus descends to a function on moduli space $\CM=\CM(\Sigma)$. Let 
\[
\m_\Sigma = \int_{\CM(\Sigma)} m(X) \, d \!\vol_{\WP},
\]
be the integral of that function with respect to the Weil-Petersson volume form.

With all this notation in place we are ready to state our main theorem, a general counting statement for the number of mapping class group orbits of a given current.

\begin{named}{Main Theorem}
Let $\bSigma$ be a compact surface of genus $g$ and $n$ boundary components, and let $\Sigma=\bSigma\setminus\D\Sigma$ be its interior. For a positive, continuous and homogeneous map $f \from \CC(\bSigma) \to \BR_+$ and a filling compactly supported current $\alpha \in \CC_c(\bSigma)$, we have
\[
  \lim_{L\to \infty} 
  \frac{ \# \big\{ \phi \in \Map(\Sigma) \st f\big(\phi(\alpha)\big) \leq L \big\}}{L^{6g+2n-6}}= 
  \frac{m(\alpha) \, m(f)}{\m_\Sigma}. 
\]
\end{named}

We can view the Main Theorem as an extension of an earlier result due to Mirzakhani. 
In \cite{M} she proved that if $X$ is a hyperbolic surface and $\gamma$ a filling closed 
curve, then 
\begin{equation}\label{Maryam}
  \lim_{L\to \infty} 
  \frac{\# \big\{ \phi \in \Map(\Sigma) \st \ell_X(\phi(\gamma)) \leq L \big\}}{L^{6g+2n-6}}
   = \frac{m(X) \, n_\gamma}{\m_\Sigma}
\end{equation}
without providing the explicit value for $n_\gamma$. In fact, Mirzakhani's result is used as key step in our proof and in section \ref{WP} we will show that $n_\gamma=m(\gamma)$.

\subsection*{Lattice counting in \Teich space}
The Main Theorem can be applied to other problems besides counting curves. As a perhaps unexpected application we prove a lattice counting theorem in \Teich space. Recall first that in \cite{ABEM}, Athreya, Bufetov, Eskin and Mirzakhani proved that for any two $X,Y\in\CT$ the number of orbit points $\Map(\Sigma)\cdot Y$ within Teichm\"uller distance $R$ of $X$ is asymptotic, as $R$ tends to $\infty$, to a constant multiple of $e^{\dim\CT\cdot R}$.
We establish the analogous result when we endow \Teich space with the Thurston metric
\[
d_{\Thu}(X,Y)=\log\left(\inf_{h: X\to Y}\Lip(h)\right).
\]
Here $\Lip(h)$ is the Lipschitz constant of $h$ and the infimum is taken over all Lipschitz maps $h \from X \to Y$ in the right homotopy class. For $X \in \CT$ and $\eta \in \CC$, define
\begin{equation} \label{DX} 
D_X(\eta) = \max_{\mu \in \ML} \frac{\I(\mu, \eta) }{\ell_X(\mu)}.
\end{equation}
Suppose now that the surface $\Sigma$ is closed. Following Bonahon \cite{Bonahon88} we can embed Teichm\"uller space $\CT$ as a subset of $\CC$ in such a way that for every $Y\in\CT$ and for every curve $\gamma$ we have
$$\I(\gamma,Y)=\ell_Y(\gamma).$$
We thus get from a result of Thurston \cite{Thurston} that
\[
d_{\Thu}(X,Y) = \log D_X(Y). 
\]
Since the function $ D_X \from \CC \to \BR_+$ is positive, homogeneous and continuous,
we can apply the Main Theorem for $\alpha=Y$, $f= D_X$. Letting $L= e^R$, we obtain:

\begin{sat}\label{Thurston}
Let $\Sigma$ be a closed surface of genus $g\ge 2$ an let $X,Y \in \CT(\Sigma)$ be two points in \Teich space. Then, 
\begin{equation*}
\lim_{R\to\infty}\frac{\# \big\{\phi\in\Map(\Sigma)  \st 
  d_{\Thu}\big(X,\phi(Y)\big)\le R\big\}}{e^{(6g-6)R}}
    =\frac{m(D_X) \, m(Y)}{\m_\Sigma},
\end{equation*}
where $d_{\Thu}$ is the Thurston metric on \Teich space and $D_X$ is as in
\eqref{DX}. \qed. 
\end{sat}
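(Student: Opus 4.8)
The plan is to deduce the statement directly from the Main Theorem, applied to the weight function $f = D_X$ and the input current $\alpha = Y$, where $Y$ is regarded as a point of $\CC$ via Bonahon's embedding $\CT\hookrightarrow\CC$. Since $\Sigma$ is closed we have $n=0$, so the exponent $6g+2n-6$ reduces to $6g-6=\dim\CT$, and the substitution $L=e^R$ turns $L^{6g-6}$ into $e^{(6g-6)R}$. The work thus consists of checking the hypotheses of the Main Theorem and matching the two counting sets.

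First I would verify that $D_X$ is admissible, i.e. positive, homogeneous and continuous on $\CC$ and positive on $\ML\setminus\{0\}$. Homogeneity is immediate from the bilinearity of $\I$: since $\I(\mu,t\eta)=t\,\I(\mu,\eta)$ we get $D_X(t\eta)=t\,D_X(\eta)$ for $t>0$. Positivity on $\ML\setminus\{0\}$ holds because for any nonzero $\mu\in\ML$ there is $\nu\in\ML$ with $\I(\nu,\mu)>0$, whence $D_X(\mu)\ge\I(\nu,\mu)/\ell_X(\nu)>0$. For continuity the key observation is that the maximum in \eqref{DX} may be taken over the compact slice $\{\mu\in\ML \st \ell_X(\mu)=1\}$; as $\I$ is continuous and this slice is compact, $\eta\mapsto\max_\mu\I(\mu,\eta)$ is continuous by the standard maximum-over-a-compact-parameter argument.

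Next I would check the input current. Because $\Sigma$ is closed, every current is compactly supported, so the compact-support hypothesis is automatic. The current $Y$ coming from a hyperbolic structure, namely its Liouville current, is filling; this is precisely what guarantees that both $D_X$ and $f_Y=\I(Y,\param)$ are positive on $\ML$. With both hypotheses in place, the Main Theorem applies to the pair $(D_X,Y)$ and produces the limit with numerator $m(D_X)\,m(Y)$ over $\m_\Sigma$. It then remains to match the counting sets: by the cited results of Bonahon and Thurston one has $d_{\Thu}(X,Z)=\log D_X(Z)$ for $Z\in\CT$, and since the $\Map(\Sigma)$-action on $\CT$ is the restriction of its action on $\CC$, we get $d_{\Thu}(X,\phi(Y))=\log D_X(\phi(Y))$. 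Hence $d_{\Thu}(X,\phi(Y))\le R$ if and only if $D_X(\phi(Y))\le e^R=L$, so the set counted in the theorem is exactly $\{\phi\in\Map(\Sigma) \st D_X(\phi(Y))\le L\}$, and substituting $L=e^R$ into the Main Theorem yields the claim.

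The main obstacle is not the application but the two borrowed facts on which it rests: the continuity of $D_X$ as a function on all of $\CC$ rather than merely on $\ML$, and Thurston's identification of the Lipschitz metric with the logarithm of the maximal stretch, together with the fact that this maximal stretch is realized over measured laminations and is therefore computed by the current-theoretic formula \eqref{DX}. Once these are granted, the remainder is bookkeeping with the substitution $L=e^R$.
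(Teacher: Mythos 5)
Your proposal is correct and follows exactly the route the paper takes: the paper proves Theorem \ref{Thurston} precisely by applying the Main Theorem with $f=D_X$ and $\alpha=Y$ (via Bonahon's embedding of $\CT$ into $\CC$ and Thurston's identity $d_{\Thu}(X,Y)=\log D_X(Y)$), substituting $L=e^R$. Your additional verifications of the hypotheses on $D_X$ and on the Liouville current $Y$ are sound and simply make explicit what the paper leaves implicit.
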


Erlandsson and the second author of this note have proved that Theorem \ref{Thurston} remains also true when the involved surface is not closed. This will appear in \cite{ES2}.

\begin{bem}
Unlike the \Teich metric, the Thurston metric $d_{\Thu}$ is not symmetric. This is reflected in the limiting constant of Theorem \ref{Thurston}. 
On the other hand, replacing $d_{\Thu}\big(X,\phi(Y)\big)\le R$ by 
$d_{\Thu}(\phi(Y),X)\le R$ will not make a difference because 
$$
\big\{\phi  \in\Map(\Sigma)  \st  d_{\Thu}\big(\phi(Y),X\big)\le R\big\}
= \big\{\phi \in\Map(\Sigma)  \st d_{\Thu}\big(Y,\phi^{-1}(X)\big)\le R\big\}.
$$
\end{bem}

\subsection*{Convergence of measures}
We now comment briefly on the proof of the main Theorem. Consider the space 
$\Measure(\CC)$ of Radon measures on $\CC$, endowing it in turn with the 
weak-*-topology induced by 
the algebra of continuous functions with compact support $C^0_{\rm c}(\CC)$. 
In plain language, this means that, for $\nu_n, \nu \in \Measure(\CC)$, 
\[
\nu_n \to \nu \quad\text{in $\Measure(\CC)$} 
\quad\Longleftrightarrow\quad
\int_\CC f\nu_n \to \int_\CC f\nu \quad \text{ for every $f\in C^0_c(\CC)$}. 
\]
Denoting by $\delta_\alpha\in\Measure(\CC)$ the Dirac measure centered at a compactly supported current $\alpha \in \CC_c(\bSigma)$, we will derive the Main Theorem from the convergence of the measures 
\begin{equation}
\nu_{\alpha}^L=\frac 1{L^{6g+2n-6}}\sum_{f\in\Map(\Sigma)}\delta_{\frac 1Lf(\alpha)} 
   \in \Measure(\CC)
\end{equation}  
as $L$ tends to $\infty$. We prove:

\begin{sat}\label{key}
With notation as in the Main Theorem we have 
\[
\lim_{L\to\infty}\nu_\alpha^L= \frac{m(\alpha)}{\m_\Sigma} \cdot \mu_{\Thu}.
\]
\end{sat}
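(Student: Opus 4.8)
The plan is to prove the stated weak-* convergence by testing against an arbitrary $F\in C^0_{\rm c}(\CC)$ and reading the integral as a renormalized orbit count,
\[
\int_\CC F\,d\nu_\alpha^L=\frac1{L^{6g+2n-6}}\sum_{\phi\in\Map(\Sigma)}F\Big(\tfrac1L\phi(\alpha)\Big).
\]
First I would show that the family $\{\nu_\alpha^L\}_L$ is locally uniformly bounded, so that vague subsequential limits exist. Any compact $K\subset\CC$ lies in a sublevel set $\{\eta:\I(\eta,\beta)\le C\}$ for a suitable filling current $\beta$, so $\tfrac1L\phi(\alpha)\in K$ forces $\I\big(\alpha,\phi^{-1}(\beta)\big)\le CL$; Mirzakhani's asymptotic \eqref{Maryam}, applied to the length-type function $\I(\param,\beta)$, bounds the number of such $\phi$ by $O(L^{6g+2n-6})$, giving a uniform bound on $\nu_\alpha^L(K)$ and hence a subsequential limit $\nu$.

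Then I would determine the shape of $\nu$. Each $\nu_\alpha^L$ is $\Map(\Sigma)$-invariant, since the group merely permutes the atoms $\delta_{\frac1L\phi(\alpha)}$, so $\nu$ is invariant as well. Comparing $\nu_\alpha^L$ with $\nu_\alpha^{L/t}$ yields the scaling relation $\nu_\alpha^{L/t}=t^{6g+2n-6}\,t_*\nu_\alpha^L$, whence $\nu$ is homogeneous of degree $6g+2n-6$ under the $\BR_+$-action on $\CC$. Moreover, since self-intersection scales quadratically, $\I\big(\tfrac1L\phi(\alpha),\tfrac1L\phi(\alpha)\big)=\tfrac1{L^2}\I(\alpha,\alpha)\to0$, and continuity of $\I$ forces every limit of the atoms to have vanishing self-intersection, i.e. to lie in $\ML$; thus $\nu$ is supported on $\ML$. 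An invariant Radon measure on $\ML$ homogeneous of the full degree $6g+2n-6$ is, by the ergodicity of the $\Map(\Sigma)$-action with respect to $\mu_{\Thu}$ and the classification of invariant measures on $\ML$, a constant multiple $c\cdot\mu_{\Thu}$; the exact homogeneity excludes the measures carried by multicurve orbits.

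It remains to pin down $c$ and to upgrade to genuine convergence. Because every subsequential limit is a multiple of $\mu_{\Thu}$, it suffices to test against one function. Choosing $F$ to approximate $\mathbf 1_{\{\ell_X\le1\}}$ for a fixed $X\in\CT(\Sigma)$ — legitimate since $\mu_{\Thu}$ assigns the boundary $\{\ell_X=1\}$ zero mass — gives $\int_\CC F\,d\nu = c\,m(X)$ on one side and, on the other, the asymptotics of $\#\{\phi:\ell_X(\phi(\alpha))\le L\}$ after using homogeneity of $\ell_X$. For $\alpha$ a filling closed curve this is exactly \eqref{Maryam}, and combined with the Weil--Petersson identity $n_\gamma=m(\gamma)$ established in section \ref{WP} it yields $c=m(\alpha)/\m_\Sigma$; uniqueness of $c$ then forces every subsequence to share the same limit, proving the theorem for curves.

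The main obstacle is the passage from curves to an arbitrary filling compactly supported current $\alpha$: Mirzakhani's theorem supplies the count only for $\gamma$, so I expect the core of the work to be showing that $c(\alpha)=\lim_L L^{-(6g+2n-6)}\#\{\phi:\ell_X(\phi(\alpha))\le L\}\big/m(X)$ varies continuously in $\alpha$ and hence agrees with $m(\alpha)/\m_\Sigma$ on the dense set of weighted closed curves. Establishing this continuity uniformly — equivalently, approximating $\alpha$ by curves in $\CC$ while controlling the associated orbit counts — is the delicate step, and it is precisely the extension of Mirzakhani's curve counting to the level of geodesic currents.
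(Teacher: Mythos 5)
Your overall skeleton (precompactness, support on $\ML$, classification of invariant measures, then pinning down the constant) matches the paper's, but two of your steps have real gaps. First, the homogeneity of a subsequential limit does not follow from the scaling relation you write down: the identity $\nu_\alpha^{L/t}=t^{6g+2n-6}(m_t)_*\nu_\alpha^L$ (with $m_t$ the scaling map) only shows that if $\nu_\alpha^{L_n}\to\nu$ then $\nu_\alpha^{L_n/t}\to t^{6g+2n-6}(m_t)_*\nu$, i.e.\ that the \emph{set} of accumulation points is invariant under this operation; it does not show that any single accumulation point is homogeneous, since the limits along $L_n$ and along $L_n/t$ need not agree before uniqueness is established. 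Since homogeneity is exactly what you use to rule out the non-Thurston measures in the Lindenstrauss--Mirzakhani classification, this step collapses. The paper avoids the issue entirely: it proves directly (Lemma \ref{lem-full}) that any accumulation point gives zero mass to $\{\lambda:\I(\lambda,\gamma)=0\}$ for every simple closed curve $\gamma$, by dominating $\nu_\alpha^L$ by $\nu_\eta^L$ for a filling \emph{multicurve} $\eta$ comparable to $\alpha$ and invoking the already-known fact (Proposition \ref{multicurve-thurston}, from \cite{ES}) that limits of $\nu_\eta^L$ are multiples of $\mu_{\Thu}$; the reformulated Lindenstrauss--Mirzakhani theorem then applies without any homogeneity input.

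Second, and more seriously, you do not actually compute the constant $c$ for a general filling current $\alpha$: you test against $\mathbf 1_{\{\ell_X\le 1\}}$, observe this requires the asymptotics of $\#\{\phi:\ell_X(\phi(\alpha))\le L\}$, note that Mirzakhani only provides this when $\alpha$ is a curve, and then defer the general case to an unproved continuity-in-$\alpha$ statement. That continuity is precisely the hard point (the error terms in the curve-counting asymptotics depend on the curve, so density of weighted curves in $\CC$ gives no uniform control), and it is not how the paper proceeds. The paper instead tests the limit against $\{\lambda:\I(\lambda,\gamma)\le 1\}$ for a fixed filling \emph{curve} $\gamma$ and uses the symmetry $\I(\phi(\alpha),\gamma)=\I(\alpha,\phi^{-1}(\gamma))$ to convert the count over the orbit of the current $\alpha$ into a count over the orbit of the curve $\gamma$ measured by the function $\I(\alpha,\param)$; this is exactly Corollary \ref{kor-combined}, which follows from Theorem \ref{Curve-Counting} (the identification $n_\gamma=m(\gamma)$) together with \cite[Corollary 4.4]{ES}. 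Without this reversal trick, or a genuine proof of your continuity claim, the identification $c=m(\alpha)/\m_\Sigma$ --- and hence the theorem --- is not established.
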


\begin{bem}
A version of this theorem, where $\alpha$ is assumed to be a curve appears in \cite{ES},
however, the constant there is not explicit. 
\end{bem}

The strategy to prove Theorem \ref{key} is as follows. 
First we show that the family $(\nu_\alpha^L)_L$ is precompact (Lemma \ref{lem-precompact}), meaning that every
sequence of $L_n\to\infty$ has a subsequence $(L_{n_i})$ such that the measures 
$\nu_\alpha^{L_{n_i}}$ converge to some measure $\nu$. 
We prove next that any such limit $\nu$ is supported by the space $\CM\CL$ of measured laminations (Lemma \ref{lem-ml}), and that it gives measure $0$ to the set of non-filling laminations (Lemma \ref{lem-full}). It then follows from Lindenstrauss-Mirzakhani's classification of locally finite mapping class group invariant measures on $\CM\CL$ 
\cite{LM, Ham} that the limit $\nu$ is a multiple $c\cdot\mu_{\Thu}$ of the Thurston measure (Lemma \ref{lem-thurston}). 
All that remains is to show that $c= \frac{m(\alpha)}{\m_\Sigma}$. We deduce that this is
the case, using a slightly improved version of Equation \eqref{Maryam}: in 
Theorem \ref{Curve-Counting} we show that the constant $n_\gamma$ in Equation \eqref{Maryam} equals $m(\gamma)$. Mirzakhani's proof of \eqref{Maryam} relies on 
existence of the limit
\[
\lim_{L\to \infty} \frac{\vol_{\WP} \big(\big\{ X \in \CT  \ \big|\,  \ell_X(\gamma) \leq L \big\}\big) }{L^{6g+2n-6}}.
\]
We make use of results of Bonahon-S\"ozen \cite{BS} and Papadopoulos \cite{Papa} to reprove the existence of this limit and further compute its value (Theorem \ref{Volume}).

\subsection*{Further comments} We add a few comments that the reader may find interesting. 

\subsection*{1.} Let $\Gamma$ be a finite index subgroup of $\Map(\Sigma)$. 
The results of this paper also hold for $\Gamma$ up to dividing the constants
by the index $[\Map(\Sigma) : \Gamma]$. For instance, if $\alpha$ and $f$ are as in 
the Main Theorem, then 
\[
  \lim_{L\to \infty} 
  \frac{ \# \big\{ \phi \in \Gamma \st f\big(\phi(\alpha)\big) \leq L \big\}}{L^{6g+2n-6}}= 
  \frac 1{[\Map(\Sigma) : \Gamma]} \cdot \frac{m(\alpha) \, m(f)}{ \m_g}. 
\]
This is because everything we use, specifically the results in \cite{M}, also holds for $\Gamma$. 

\subsection*{2.} One should definitively point out that the results in this paper do not apply if the surface $\Sigma$ is not orientable. In fact, already Mirzakhani's Equation \eqref{Maryam} fails in that case, \cite{Orientable1,Orientable2}.

\subsection*{3.} The arguments in this paper show that not only does Mirzakhani's \eqref{Maryam} imply Theorem \ref{key} and thus the Main Theorem, but that also the implications can be reversed. It would be interesting to see if the lattice counting theorem with respect to the Teichm\"uller metric \cite{ABEM} is also equivalent to these results.

\subsection*{Acknowledgements} 
The work of Maryam Mirzakhani provides the foundation for the results in this paper 
and in general she has been an inspiration for us. We dedicate this paper, 
which hopefully she would have found amusing, to her memory. We also thank the referee
for helpful comments. 

\section{Currents} \label{Sec:Prelim}
In this section, we recall a few facts on currents. See \cite{Bonahon86,Bonahon88} and \cite{Javi-Chris} for a thorough treatment.

\subsection*{Definitions and notation}
Continuing with the same notation as in the introduction, let $\bSigma$ be a compact surface with negative Euler-characteristic. Let $g$ be the genus and $n$ the number of boundary components of $\bSigma$. 

We endow $\bSigma$ with a fixed but otherwise arbitrary hyperbolic metric with respect 
to which the boundary is geodesic and let 
$PT_{\rm rec}\bSigma$ (resp. $T^1_{\rm rec}\bSigma$) be the subset of the projective 
tangent bundle $PT\bSigma$ (resp. unit tangent bundle $T^1\bSigma$) consisting of lines (resp. vectors) tangent to bi-infinite geodesics. By construction, $PT_{\rm rec}\bSigma$ has a $1$--dimensional foliation whose leaves are the lifts of geodesics. 

A {\em geodesic current} is a Radon transverse measure to the geodesic foliation of $PT_{\rm rec}\bSigma$. Equivalently, a geodesic current is a $\pi_1(\bSigma)$--invariant Radon measure on the space of unoriented geodesics in the universal cover of $\bSigma$. Also,  geodesic currents are in one-to-one correspondence with Radon measures on $T^1_{\rm rec}\bSigma$ which are invariant under both the geodesic flow and the geodesic flip. Recall that a Borel measure is Radon if it is locally finite and inner regular. We denote the space of all geodesic currents, endowed with the weak-*-topology, by $\CC(\bSigma)$. 

It follows from the very definition of current that we can consider (unoriented) periodic orbits of the geodesic flow, that is closed geodesics, as geodesic currents. In fact, the set $\BR_+\CS$ of weighted closed geodesics is dense in $\CC(\bSigma)$. A measured lamination is a lamination of $\Sigma=\bSigma\setminus\D\bSigma$ endowed with a transverse measure of full support. As such, a measured lamination is also a current. Actually, the space $\ML(\Sigma)$ of measured laminations is nothing but the closure in $\CC(\bSigma)$ of the set of all weighted simple curves other than the boundary. Thurston proved that $\CM\CL(\Sigma)$ is, with the induced topology, homeomorphic to $\BR^{6g+2n-6}$. 

\subsection*{Mapping class group action}
The space $\CC(\bSigma)$ of currents is independent of the chosen metric on $\bSigma$ in the following sense: Any homeomorphism $\bSigma\to\bSigma'$ between hyperbolic surfaces with geodesic boundary induces a homeomorphism $PT_{\rm rec}\bSigma\to PT_{\rm rec}\bSigma'$ mapping one geodesic foliation to the other. Then the map $\CC(\bSigma)\to\CC(\bSigma')$ between the spaces of currents induced by the foliation preserving homeomorphism $PT_{\rm rec}\bSigma\to PT_{\rm rec}\bSigma'$ is also a homeomorphism. Note in particular that the mapping class group 
$$\Map(\Sigma)=\Homeo_+(\Sigma)/\Homeo_0(\Sigma)=\Homeo_+(\bSigma)/\Homeo_0(\bSigma)$$ 
of the interior $\Sigma=\bSigma\setminus\D\bSigma$ acts on $\CC(\bSigma)$ by homeomorphisms.

The space $\CM\CL(\Sigma)$ is not only homeomorphic to euclidean space, but has also a compatible mapping class group invariant integral PL-manifold structure. Here, integral means that the change of charts are given by linear transformations with integral coefficients. 

\subsection*{Compactly supported currents}
We will be mostly interested in currents in the set 
\[
\CC_0(\bSigma)
  =\big\{\lambda\in\CC(\bSigma)\text{ with }\lambda(\D\bSigma)=0\big\},
\]
that is in those currents which do not assign any weight to the boundary components of 
$\bSigma$. Also, for $K\subset\Sigma$ compact let 
\[
\CC_K(\bSigma)
=\big\{\lambda\in\CC(\bSigma)\text{ with }\supp(\lambda)\subset\CG_K(\bSigma)\big\}
\]
be the set of currents supported by the set of $\CG_K(\bSigma)$ of geodesics in the universal cover of $\bSigma$ whose projection to the base is contained in $K$. Finally set
\[
\CC_c(\bSigma)=\bigcup_{K\subset\Sigma\text{ compact}}\CC_K(\bSigma)
\]
to be the set of currents supported by some compact set. We refer to the element of $\CC_c(\bSigma)$ as {\em compactly supported currents.} We make a 
few comments on compactly supported currents:
\begin{itemize}
\item While $\CC(\bSigma)$ and $\CC_K(\bSigma)$ are projectively compact, neither $\CC_0(\bSigma)$ not $\CC_c(\bSigma)$ are if $\D\bSigma\neq\emptyset$. This is basically the reason why one needs to be more careful while working with currents on open surfaces.
\item For every $K\subset\Sigma$ compact there is another compact set 
$K'\subset\Sigma$ with 
\[
\Map(\Sigma)\cdot\CC_K(\bSigma)\subset\CC_{K'}(\bSigma).
\] 
In particular, both $\CC_0(\bSigma)$ and $\CC_c(\bSigma)$ are invariant under the mapping class group.
\item Let $X$ be a hyperbolic metric on the interior $\Sigma$ of $\bSigma$. The length function $\ell_X$ on the set of curves $\CS$ extends continuously to a positive homogenous function on the space $\CC_c(\bSigma)$ of compactly supported currents. In fact, many other length functions extend as well \cite{Hugo}.
\item There is some compact set $K\subset\Sigma$ with 
$\CM\CL(\Sigma)\subset\CC_K(\bSigma)$. In other words, measured laminations are 
compactly supported currents. 
\end{itemize}

\subsection*{Intersection pairing}
In \cite{Bonahon86} Bonahon introduced the map
\begin{equation}\label{eq-intersection}
\I:\CC(\bSigma)\times\CC(\bSigma)\to\BR_+, 
  \qquad \I(\lambda,\mu)=(\lambda\otimes\mu)(DPT\bSigma),
\end{equation}
where $DPT\bSigma$ is the bundle over $\bSigma$ whose fibre over $p$ is the set
$$
DPT\bSigma_p=\big\{(v,w)\in PT_p\bSigma\times PT_p\bSigma\text{ with }v\neq w\big\}
$$
consists of pairs of distinct points in the projective space of the tangent space $T_p\bSigma$. Bonahon proved that $\I(\param,\param)$ is continuous and homogenous, 
where homogenous means that
$$\I(t\cdot\lambda,s\cdot\mu)=s\cdot t\cdot\I(\lambda,\mu)$$
for all $\lambda,\mu\in\CC(\bSigma)$ and $t,s\in\BR_+$. Moreover, it follows directly from 
the definition that for any two closed primitive geodesics $\alpha,\beta$ in $\bSigma$ 
the quantity $\I(\alpha,\beta)$ is nothing other than the minimal number of transversal 
intersections of curves homotopic to $\alpha$ and $\beta$ and in general position. 
The map $\I$ is called the {\em intersection pairing}.

While $\I(\alpha,\param)=0$ whenever $\alpha$ is a boundary component of $\bSigma$, 
we have that the intersection pairing is non-degenerate on the set $\CC_0(\bSigma)$ of 
currents supported by the interior of $\bSigma$. In fact, measured laminations are 
characterized to be those currents in $\CC_0(\bSigma)$ with vanishing self-intersection 
number:
$$
\CM\CL(\Sigma)=\big\{\lambda\in\CC_0(\bSigma)\text{ with }\I(\lambda,\lambda)=0\big\}.
$$
On the other extremum, a current $\lambda\in\CC_0(\bSigma)$ is called {\em filling} if 
$\I(\lambda,\mu)>0$ for every non-zero current $\mu\in\CC_0(\bSigma)$. Equivalently, 
a current is filling if its support meets transversally every bi-infinite geodesic other than 
the boundary components.

We list a few facts on $\I(\param,\param)$:
\begin{itemize}
\item If $\phi \from \bSigma\to\bSigma'$ is a homeomorphism between hyperbolic surfaces,
then the induced homeomorphism $\phi^\star \from \CC(\bSigma)\to\CC(\bSigma')$ 
commutes with the scaling action of $\BR_+$ and with the intersection pairing. 
In particular, the intersection paring is invariant under the mapping class group action 
$\Map(\bSigma)\actson\CC(\bSigma)$.
\item If $K\subset\Sigma$ is compact and if $\lambda\in\CC_0(\bSigma)$ is a filling current, then the set 
\[
\big\{\mu\in\CC_K(\Sigma)\st \I(\lambda,\mu)\le 1\big\}
\] 
is compact in $\CC_K(\Sigma)$.
\item For any two filling currents $\mu,\lambda\in\CC_c(\Sigma)$ and for every compact set $K\subset\Sigma$, there is some $C$ with
$$C^{-1}\cdot\I(\mu,\eta)\le\I(\lambda,\eta)\le C\cdot\I(\mu,\eta)$$
for every $\eta\in\CC_K(\Sigma)$.
\item Suppose that $\mu\in\CC_c(\Sigma)$ is a filling current and that $X\in\CT(\Sigma)$ is a complete hyperbolic structure on the interior $\Sigma$ of $\bSigma$. For every compact set $K\subset\Sigma$, there is some $C$ with
$$C^{-1}\cdot\I(\mu,\eta)\le \ell_X(\eta)\le C\cdot\I(\mu,\eta)$$
for every $\eta\in\CC_K(\Sigma)$.
\end{itemize}

Before moving on, suppose for a moment that $\bSigma$ is closed, that is that it has empty boundary. Then, for every point $X\in\CT(\bSigma)$ in Teichm\"uller space, there is a current, again denoted by $X$ and called the {\em Liouville current}, with 
$$\I(X,\gamma)=\ell_X(\gamma)$$ 
for every closed curve $\gamma$. The Liouville current $X$ is a filling current. 

\subsection*{A comment on the continuity of the $\I(\param,\param)$}
In the literature, currents are often either discussed on the setting of hyperbolic groups or for closed surfaces. Considering currents on $\bSigma$ is consistent with the former point of view. What is specific about surfaces is the existence of the intersection pairing -- continuity being the only non-trivial result. We explain briefly how the continuity of the intersection pairing follows from the result in the closed case. Let $\hat\Sigma$ be a closed hyperbolic surface for which there is an isometric embedding $\bSigma\hookrightarrow\hat\Sigma$. For instance, take $\hat\Sigma$ to be the double of $\bSigma$. The inclusion of $\bSigma$ into $\hat\Sigma$ induces a foliation preserving embedding $PT_{\rm rec}\bSigma\to PT_{\rm rec}\hat\Sigma$. This embedding induces an embedding $\CC(\bSigma)\to\CC(\hat\Sigma)$. By definition, the intersection pairing on $\CC(\bSigma)$ is nothing other than the restriction of the intersection pairing on $\CC(\hat\Sigma)$. Continuity of the latter then implies continuity of the former.

\section{Weil-Petersson Volume} \label{WP}
In this section, we compute $n_\gamma$ in
Equation \eqref{Maryam} proving the following version of the Mirzakhani's
theorem \cite[Theorem 1.1]{M}. 

\begin{sat} \label{Curve-Counting}
Let $\bSigma$ be a compact surface of genus $g$ with $n$ boundary components and let $X$ be a complete finite volume hyperbolic structure on its interior $\Sigma=\bSigma\setminus\D\bSigma$. Then 
\begin{equation*}
  \lim_{L\to \infty} 
  \frac{\# \big\{ \phi \in \Map(\Sigma) \st \ell_X(\phi(\gamma)) \leq L \big\}}{L^{6g+2n-6}}
   = \frac{m(X) \, m(\gamma)}{\m_\Sigma}
\end{equation*}
for every filling closed curve $\gamma\subset\Sigma$.
\end{sat}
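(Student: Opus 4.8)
The plan is to deduce the statement from Mirzakhani's Equation~\eqref{Maryam}, in which the only undetermined quantity is the constant $n_\gamma$; thus everything reduces to proving $n_\gamma=m(\gamma)$. To get a handle on $n_\gamma$ I would integrate the orbit--counting function over moduli space. Since $\gamma$ is filling, the function $X\mapsto\ell_X(\gamma)$ is proper on $\CT(\Sigma)$, so that
\[
V(L,\gamma):=\vol_{\WP}\big(\big\{X\in\CT(\Sigma)\st \ell_X(\gamma)\le L\big\}\big)
\]
is finite for every $L$. A standard unfolding over $\CM(\Sigma)=\CT(\Sigma)/\Map(\Sigma)$, using $\ell_X(\phi(\gamma))=\ell_{\phi^{-1}(X)}(\gamma)$, gives
\[
\int_{\CM(\Sigma)}\#\big\{\phi\in\Map(\Sigma)\st \ell_X(\phi(\gamma))\le L\big\}\,d\!\vol_{\WP}(X)=V(L,\gamma).
\]
Dividing by $L^{6g+2n-6}$, feeding in the pointwise asymptotics of \eqref{Maryam} together with the identity $\m_\Sigma=\int_{\CM(\Sigma)}m(X)\,d\!\vol_{\WP}$, identifies $n_\gamma=\lim_{L\to\infty}V(L,\gamma)/L^{6g+2n-6}$: the constant $n_\gamma$ is the leading coefficient of the Weil--Petersson volume growth of the $\gamma$--sublevel sets.

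Passing the limit through the integral is the first point that needs care. Fatou's lemma gives the inequality $n_\gamma\le\liminf_{L}V(L,\gamma)/L^{6g+2n-6}$ for free, but the reverse inequality requires an $L$--uniform integrable majorant for the rescaled counting function on $\CM(\Sigma)$, controlling its growth in the thin part of moduli space; I would extract one from the estimates underlying Mirzakhani's proof of \eqref{Maryam}. Alternatively---and this is cleaner---Mirzakhani's argument itself already identifies $n_\gamma$ with exactly this volume limit, so one may take that identification as given and concentrate all of the remaining work on computing the limit.

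The substantive step, and the one I expect to be the \emph{main obstacle}, is to prove that $\lim_{L\to\infty}V(L,\gamma)/L^{6g+2n-6}$ exists and equals $m(\gamma)=\mu_{\Thu}(\{\lambda\in\ML\st\I(\gamma,\lambda)\le 1\})$; this is Theorem~\ref{Volume}. The underlying mechanism is a degeneration of Teichm\"uller space onto $\ML$: as $L\to\infty$ the sublevel sets exhaust $\CT(\Sigma)$, and along rays towards Thurston's boundary the length $\ell_X(\gamma)$ is asymptotic to the intersection number $\I(\gamma,\param)$ with the limiting measured lamination, so after rescaling by $1/L$ the constraint $\ell_X(\gamma)\le L$ converges to $\I(\gamma,\lambda)\le 1$. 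To control the measure I would invoke the theorem of Bonahon--S\"ozen \cite{BS} that the Weil--Petersson symplectic form degenerates, in shear coordinates on the boundary, to the Thurston symplectic form---whose associated volume is the Thurston measure $\mu_{\Thu}$---together with the comparison results of Papadopoulos \cite{Papa}. Granting that the $1/L$--rescaled Weil--Petersson volume form converges to $\mu_{\Thu}$, a change of variables yields $V(L,\gamma)\sim m(\gamma)\,L^{6g+2n-6}$, hence $n_\gamma=m(\gamma)$, and substituting back into \eqref{Maryam} proves the theorem. The genuinely delicate issue throughout is the uniformity of these convergences near the boundary of $\CT(\Sigma)$ and near the cusps of moduli space.
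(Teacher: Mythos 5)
Your top-level strategy coincides with the paper's: reduce everything to the identification $n_\gamma=m(\gamma)$, which splits into (a) relating the orbit count $N(X,L)=\#\{\phi\st\ell_X(\phi(\gamma))\le L\}$ to $\vol_{\WP}(B_\CT(\gamma,L))$, and (b) proving that this volume is asymptotic to $m(\gamma)\,L^{6g+2n-6}$ (the paper's Theorem \ref{Volume}). For (a) you take a genuinely different route: you unfold the counting function over moduli space and try to exchange limit and integral, whereas the paper re-runs Mirzakhani's own argument, decomposing $B_\CT(\gamma,L)$ into Fenchel--Nielsen cones $C_P^{\m}$, applying her cone-by-cone asymptotic \eqref{cone}, and controlling the tail via the estimates \eqref{L/m-vol}--\eqref{L/m-phi} and the summability of $\sum 1/(m_1^2\cdots m_{3g-3}^2)$. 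Your route would be cleaner if the dominated-convergence step could be closed, but as written it is not: you correctly note that Fatou gives only one inequality and that the reverse needs an $L$-uniform integrable majorant for $N(X,L)/L^{6g+2n-6}$ on $\CM$, and you do not produce one. Note that the pointwise limit $m(X)n_\gamma/\m_\Sigma$ is itself unbounded on $\CM$ (it blows up in the thin part, where twisting about a short curve produces many orbit points of controlled length), so the majorant must be an unbounded integrable function comparable to $m(X)$; the paper's cone estimates are, in effect, the substitute for it. Your fallback --- taking Mirzakhani's identification of $n_\gamma$ with the volume limit as given --- is essentially what the paper does, citing \cite[Section 9.4]{M}.

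The larger gap is in (b), which you yourself flag as the main obstacle but then only sketch, and with the wrong mechanism. The point is not that ``the $1/L$-rescaled Weil--Petersson volume form converges to $\mu_{\Thu}$'': Thurston's map $\Phi_\mu\from\CT\to\ML(\mu)$ is an \emph{exact} symplectomorphism (Bonahon--S\"ozen, Papadopoulos--Penner), so $\vol_{\WP}(B_\CT(\gamma,L))=\mu_{\Thu}(B_L)$ on the nose, where $B_L=\Phi_\mu(B_\CT(\gamma,L))$, and the entire asymptotic analysis is transferred to the rescaled sets $\frac1LB_L$ inside $\ML$. Two ingredients you omit are then essential: Papadopoulos's two-sided comparison $\I(\Phi_\mu(X),\alpha)\le\ell_\alpha(X)\le\I(\Phi_\mu(X),\alpha)+C$ (Lemma \ref{Papa}), which gives pointwise convergence of $\frac1LB_L$ to $\{\lambda\st\I(\gamma,\lambda)\le1\}$, and the convexity of $B_L$ in shear coordinates (convexity of length functions, \cite{BBFS,Theret}), which is what upgrades pointwise convergence of the sets to convergence of their Thurston measures. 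Without exactness of the symplectomorphism and without the convexity argument, the passage from ``the sets converge'' to ``their volumes converge'' is unjustified, and this is precisely where the content of Theorem \ref{Volume} lies.
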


We need some preparation before coming to the proof. As we mentioned earlier, $\ML$ 
is naturally a PL-manifold. Moreover, it is endowed with a mapping class group invariant symplectic structure, namely Thurston's intersection pairing 
\cite{Penner-Harer}. Actually, this symplectic structure is compatible with the Weil-Peterson form on $\CT$. More
precisely, let $\mu$ be a geodesic lamination where the complementary components 
are ideal triangles and let $\ML(\mu)$ be the open and dense subset of $\CM\CL$ 
consisting of measured laminations that are transverse to $\mu$. 
Note that $\mu$ itself may not be a measured lamination. In fact, in case $\bSigma$
has boundaries, we take (for simplicity) $\mu$ to be a union of finitely many bi-infinite 
geodesics. Thurston has showed \cite{Thurston} that there is a global parametrization
\[
\Phi_\mu \from \CT \to \ML(\mu). 
\]
sending a hyperbolic structure $X$ to a measured lamination associated to the 
measured foliation $F$ transverse to $\mu$ such that the $F$--measure of a sub-arc 
$\omega$ of $\mu$ is the hyperbolic length of $\omega$ in $X$. The map $\Phi_\mu$ 
is relevant in the present setting becauseit was shown by Bonahon and S\"ozen proved 
in the closed case \cite{BS} and by Papadopoulos and Penner in the case where 
$\mu$ is finite \cite{PaP} that it is a symplectomorphism when the source is endowed 
with the Weil-Petersson form and the target with Thurston's intersection pairing.

Now, the symplectic structure on $\ML$ induces a mapping class 
group invariant measure 
in the Lebesgue class. This measure is the so-called {\em Thurston measure} $\mu_{\Thu}$. 
It is an infinite but locally finite measure, positive on non-empty open sets.
Further we have
$$\mu_{\Thu}(L\cdot U)=L^{6g-6}\cdot\mu_{\Thu}(U)$$ 
for all $U\subset\ML(\Sigma)$ and $L>0$. This implies that $\mu_{\Thu}(A)=0$ if 
$A\cap L\cdot A=\emptyset$ for all $L>0$. In particular,
$$\mu_{\Thu}\big(\big\{\lambda\in\CM\CL(\Sigma) \st  f(\lambda)=L\big\}\big)=0$$
for any positive continuous homogenous function $f$ on the space of currents. Finally, note that $\ML(\mu)$ has full measure.  

\begin{bem}
It is due to Masur \cite{Masur} that, up to scaling, the Thurston measure is the only $\Map(\Sigma)$
invariant measure on $\ML$ in the Lebesgue class. However, several natural normalization
are possible and it is not known if they agree. For example, one could normalize the 
measure to be equal to the scaling limit 
$$\lim_{L\to\infty}\frac 1{L^{6g-6}}\sum_{\gamma\in\CM\CL_\BZ}\delta_{\frac 1L\gamma}$$
of the counting measure on integral multi-curves. For a discussion of how 
the measure defined by the symplectic structure (which is what we use in 
this paper) is related to the measure defined by the scaling limit (used by Mirzakhani 
in \cite{M}) see \cite{Ivan-Leoni} 
\end{bem}

The main idea in \cite{M} is to relate the counting problem in \eqref{Maryam} to 
the Weil-Petersson volume of certain sets in $\CT$. More precisely, for a filling curve
$\gamma$ and $L>0$, Mirzakhani considered the sets
\[
B_\CT(\gamma,L) = \Big\{ X \in \CT  \ \Big|\,  \ell_X(\gamma) \leq L \Big\}
\]
and showed \cite[Theorem 8.1]{M} that
\begin{equation} \label{Volume-Maryam}
\lim_{L\to \infty} \frac{\vol_{\WP} \big( B_\CT (\gamma, L) \big) }{L^{6g-6}}
\end{equation} 
exists. We reprove this theorem, further giving a precise value for the limit. 
First, we recall the follow lemma from \cite[Lemma 4.9]{Papa}.

\begin{lem}[Papadopoulos] \label{Papa}
Let $X_n$ be a sequence of points in $\CT$ converging to a lamination $\lambda$
in the Thurston boundary. Then, for every curve $\alpha$, there exists a constant $C$ 
such that for every $n$, we have 
\[
\I\big(\Phi_\mu(X_n), \alpha\big)\leq \ell_{\alpha}(X_n) 
    \leq  \I \big(\Phi_\mu(X_n), \alpha\big)+C.
\]
\end{lem}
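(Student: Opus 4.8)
The plan is to identify $\Phi_\mu(X_n)$ with Thurston's horocyclic measured foliation and to compare its transverse measure to hyperbolic length one complementary ideal triangle at a time. Recall that $\Phi_\mu(X_n)$ is the measured lamination underlying the horocyclic foliation $F_n$ of $X_n$ transverse to $\mu$, normalized so that the $F_n$--transverse measure of a sub-arc of a leaf of $\mu$ equals its hyperbolic length in $X_n$. I would realize $\alpha$ as its $X_n$--geodesic representative $\alpha_n^*$. Since $\alpha_n^*$ and the geodesic lamination $\mu$ are both geodesic for $X_n$, they meet minimally, so the number of intersection points equals the geometric intersection number $N=i(\alpha,\mu)$, a purely topological quantity independent of $n$. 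These points cut $\alpha_n^*$ into $N$ sub-arcs, each contained in the closure of a single complementary ideal triangle.

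For the lower bound I would use a pointwise comparison of the two measures. In the standard upper half-plane model of an ideal triangle with a cusp at $\infty$, the leaves of $F_n$ near that cusp are the horocycles $y=\mathrm{const}$ and the $F_n$--transverse measure is $|d\log y|$; in each of the three cusp regions one has $dm_{F_n}\le ds_{X_n}$ pointwise, since $|d\log y|\le \sqrt{dx^2+dy^2}/y$, while the central non-foliated region contributes nothing to the transverse measure. Hence the total $F_n$--transverse measure of $\alpha_n^*$ is at most $\ell_{X_n}(\alpha)$. Because $\I(\Phi_\mu(X_n),\alpha)$ is the infimum of the transverse measure over the isotopy class of $\alpha$, and $\alpha_n^*$ is one representative, this yields $\I(\Phi_\mu(X_n),\alpha)\le \ell_{X_n}(\alpha)$ with no error term.

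For the upper bound I would argue triangle by triangle. The key local fact is that inside each complementary ideal triangle the hyperbolic length of a geodesic arc running from one side to another exceeds the $F_n$--transverse measure it crosses by at most a universal constant $C_0$: away from the central non-foliated region the arc lies in a cusp strip where length and transverse measure agree up to the bounded horizontal contribution $\int |dx|/y$, and the non-foliated region has uniformly bounded diameter. Summing over the $N$ sub-arcs, and using that $\alpha_n^*$ realizes $\I(\Phi_\mu(X_n),\alpha)$ (it crosses the geodesic lamination without bigons, the residual effect of collapsing the non-foliated regions being absorbed into $C_0$), I obtain $\ell_{X_n}(\alpha)\le \I(\Phi_\mu(X_n),\alpha)+NC_0$. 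Thus $C=i(\alpha,\mu)\cdot C_0$ works, uniformly in $n$ precisely because $i(\alpha,\mu)$ is a topological invariant; the convergence of $X_n$ to the Thurston boundary is in fact not needed for the estimate, only for the context in which the lemma is applied.

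The hard part is the per-triangle comparison of hyperbolic length with horocyclic transverse measure, together with the bookkeeping relating the transverse measure of $F_n$ to the intersection number of the associated measured lamination $\Phi_\mu(X_n)$, i.e.\ controlling the effect of collapsing the non-foliated regions. One must verify that the discrepancy contributed by each ideal triangle --- arising from the central non-foliated triangle and from the horizontal drift of the geodesic inside the cusp strips --- is bounded by a constant independent of both the triangle and of $n$. Once this local estimate is established, the global statement follows by summing over the topologically fixed number $i(\alpha,\mu)$ of crossings.
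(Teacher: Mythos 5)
The paper offers no proof of this lemma at all: it is quoted directly from \cite[Lemma 4.9]{Papa}, so there is no in-paper argument to compare yours against and I can only judge your proposal on its own terms. Your lower bound is fine: pointwise $dm_{F_n}\le ds_{X_n}$ in the foliated part, the non-foliated kernels carry no transverse measure, and the geodesic representative is one competitor for the infimum defining $\I\big(\Phi_\mu(X_n),\alpha\big)$, so no error term is needed there.

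The upper bound, however, has a genuine gap at its very first step. You cut $\alpha_n^*$ at its intersection points with $\mu$ and assert that there are $N=i(\alpha,\mu)$ of them, a finite topological invariant. But $\mu$ is a complete geodesic lamination whose complementary regions are ideal triangles, not a multicurve, and it carries no transverse measure, so ``$i(\alpha,\mu)$'' is not defined; worse, on a closed surface --- exactly the case needed for Theorem \ref{Thurston} --- the leaves of $\mu$ necessarily accumulate (they spiral onto closed geodesics or onto a minimal sublamination), so the closed geodesic $\alpha_n^*$ meets $\mu$ in infinitely many points and is cut into infinitely many sub-arcs. Your bound $C=N\,C_0$ is then vacuous. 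The correct bookkeeping, which is what Papadopoulos (following Thurston) actually does, decomposes $\alpha_n^*$ not by its crossings of $\mu$ but by its finitely many visits to the non-foliated central regions: one shows that the number of such visits is bounded in terms of $\alpha$ and $\mu$ alone, and that between two consecutive visits the geodesic runs through a chain of spikes in which length exceeds the transverse measure crossed by a bounded total amount (the horizontal drift $\int |dx|/y$ along a circular geodesic arc is at most $\pi$, but the backtracking relative to the horocyclic leaves must be shown to be summable over the infinitely many spikes traversed). Relatedly, your parenthetical claim that $\alpha_n^*$ computes $\I\big(\Phi_\mu(X_n),\alpha\big)$ up to an error absorbed into $C_0$ requires an actual argument once there are infinitely many spike crossings. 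In the special case where $\mu$ is an ideal triangulation of a cusped surface, so that $\alpha_n^*\cap\mu$ really is finite, your outline is essentially the standard one and can be made to work; as written, though, it does not prove the lemma in the generality in which the paper uses it.
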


\begin{sat}\label{Volume}
We have 
\[
\lim_{L \to \infty} \frac{\vol_{\WP} \big( B_\CT (\gamma, L) \big)}{L^{6g-6}} = m(\gamma)
\]
for any filling curve $\gamma$.
\end{sat}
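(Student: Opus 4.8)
The plan is to compute the limit
\[
\lim_{L \to \infty} \frac{\vol_{\WP}\big(B_\CT(\gamma,L)\big)}{L^{6g-6}}
\]
by transporting the volume computation from $\CT$ to $\ML$ via the Thurston parametrization $\Phi_\mu$ and then comparing the resulting region with the model set defining $m(\gamma)$. The key identity to exploit is that $\Phi_\mu \from \CT \to \ML(\mu)$ is a symplectomorphism (Bonahon--S\"ozen \cite{BS}, Papadopoulos--Penner \cite{PaP}), so the push-forward $(\Phi_\mu)_*\vol_{\WP}$ is exactly the Thurston measure $\mu_{\Thu}$ restricted to the full-measure set $\ML(\mu)$. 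Hence
\[
\vol_{\WP}\big(B_\CT(\gamma,L)\big)
= \mu_{\Thu}\Big(\Phi_\mu\big(B_\CT(\gamma,L)\big)\Big)
= \mu_{\Thu}\big(\big\{\lambda \in \ML(\mu) \st \ell_X(\gamma) \le L \text{ where } \lambda = \Phi_\mu(X)\big\}\big).
\]
The whole problem is thereby reduced to understanding the image region $\Phi_\mu(B_\CT(\gamma,L)) \subset \ML$.

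First I would use Lemma \ref{Papa} to replace the length function $X \mapsto \ell_X(\gamma)$ by the intersection function $X \mapsto \I(\Phi_\mu(X),\gamma)$, which is what naturally appears in the definition of $m(\gamma)$. The lemma gives the two-sided bound
\[
\I\big(\Phi_\mu(X),\gamma\big) \le \ell_X(\gamma) \le \I\big(\Phi_\mu(X),\gamma\big) + C
\]
with $C$ independent of $X$ (as $X \to \infty$ in the relevant family). Writing $\lambda = \Phi_\mu(X)$ and setting $f_\gamma(\lambda) = \I(\gamma,\lambda)$, this shows the region $\Phi_\mu(B_\CT(\gamma,L))$ is sandwiched between the sublevel sets $\{\lambda : f_\gamma(\lambda) \le L - C\}$ and $\{\lambda : f_\gamma(\lambda) \le L\}$, up to the subtlety of where $\Phi_\mu$ is defined. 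Therefore
\[
\mu_{\Thu}\big(\{f_\gamma \le L-C\}\big) \le \vol_{\WP}\big(B_\CT(\gamma,L)\big) \le \mu_{\Thu}\big(\{f_\gamma \le L\}\big).
\]

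Next I would invoke the scaling property $\mu_{\Thu}(L \cdot U) = L^{6g-6}\mu_{\Thu}(U)$, which is stated in the excerpt. Since $f_\gamma$ is homogeneous of degree one, $\{f_\gamma \le L\} = L \cdot \{f_\gamma \le 1\}$, so
\[
\mu_{\Thu}\big(\{f_\gamma \le L\}\big) = L^{6g-6} \cdot \mu_{\Thu}\big(\{f_\gamma \le 1\}\big) = L^{6g-6} \cdot m(\gamma),
\]
and likewise the lower bound equals $(L-C)^{6g-6} m(\gamma)$. Dividing through by $L^{6g-6}$ and letting $L \to \infty$, the factor $(L-C)^{6g-6}/L^{6g-6} \to 1$ squeezes both bounds to $m(\gamma)$, giving the claimed limit. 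The finiteness $m(\gamma) < \infty$ is guaranteed because $\gamma$ is filling, so $\{f_\gamma \le 1\}$ is compact, as recalled in the introduction.

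The main obstacle is the boundary/domain mismatch between $\Phi_\mu(B_\CT(\gamma,L))$ and the clean sublevel set $\{f_\gamma \le L\}$: the parametrization $\Phi_\mu$ maps $\CT$ onto the open dense subset $\ML(\mu)$, not all of $\ML$, and Lemma \ref{Papa}'s additive constant $C$ is only asserted for sequences $X_n \to \lambda$ converging to the Thurston boundary, so care is needed to make the comparison uniform as $L \to \infty$. I would handle this by noting $\ML \setminus \ML(\mu)$ has $\mu_{\Thu}$-measure zero (stated in the excerpt), so it contributes nothing to either volume, and by using a compactness/continuity argument on the projectivized boundary $\BP\ML$ to promote the pointwise bound of Lemma \ref{Papa} to a uniform additive error over the sublevel sets, absorbing the discrepancy into the harmless $(L-C)^{6g-6}/L^{6g-6} \to 1$ correction.
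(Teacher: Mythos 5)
Your argument is correct and shares its two essential inputs with the paper's proof (the symplectomorphism property of $\Phi_\mu$ and Lemma \ref{Papa}), but the final comparison step is genuinely different. The paper observes that $B_L=\Phi_\mu(B_\CT(\gamma,L))$ is a \emph{closed convex} subset of $\ML(\mu)$ --- convexity coming from convexity of length functions in shearing coordinates \cite{BBFS,Theret} --- and then uses Lemma \ref{Papa} only in its sequential form to show that the rescaled sets $\frac1L B_L$ converge pointwise to $B_\ML(\gamma,1)$, invoking the principle that pointwise convergence of closed convex sets forces convergence of their volumes. You instead sandwich $\Phi_\mu(B_\CT(\gamma,L))$ between the sublevel sets $\{\I(\gamma,\cdot)\le L-C\}$ and $\{\I(\gamma,\cdot)\le L\}$ and squeeze using the homogeneity of $\mu_{\Thu}$; this dispenses with convexity and with the (unproved in the paper) convex-sets-volume-convergence principle, at the cost of needing the additive constant $C$ of Lemma \ref{Papa} to be uniform over $\CT$ rather than sequence-by-sequence. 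You correctly flag this as the one real obstacle, and your proposed fix works: if no uniform $C$ existed one could pick $X_n$ with $|\ell_{X_n}(\gamma)-\I(\Phi_\mu(X_n),\gamma)|\ge n$; such a sequence either stays in a compact subset of $\CT$, where both functions are continuous hence the difference bounded, or has a subsequence converging in the Thurston compactification to a point of $\BP\ML$, where Lemma \ref{Papa} bounds the difference --- a contradiction either way. Spelling out that contradiction argument (and noting, as you do, that $\ML\setminus\ML(\mu)$ is $\mu_{\Thu}$-null so the lower inclusion only needs to hold up to measure zero) would make your proof complete; on balance it is arguably more self-contained than the paper's, which leans on convexity results that are themselves nontrivial imports.
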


\begin{proof}
Recall that the map $\Phi_\mu$ considered earlier is a symplectomorphism and thus volume preserving. Thus, to compute the Weil-Petersson volume of $B_\CT (\gamma, L)$, we can instead find out the Thurston volume of $B_L = \Phi_\mu \big(  B_\CT (\gamma, L) \big)$:
$$\vol_{\WP}(B_\CT (\gamma, L))=\mu_{\Thu}(B_L).$$
Note that it follows from the convexity of lengths with respect to the shearing coordinates 
\cite{BBFS, Theret} that $B_L$ is a closed convex set in $\ML(\mu)$. 

Since Thurston's volume form
is compatible with the linear structure in $\ML$, we have 
\[
\mu_{\Thu} (B_L) = L^{6g-6} \mu_{\Thu} \left (\frac 1L B_L \right)
\]
where 
\[
\frac 1L B_L =  \left\{ \frac 1L \lambda  \ \Big|\,  \lambda \in B_L \right\}.
\]

Also, it follows immediately from Lemma~\ref{Papa} that, for sequences $X_n \in \CT$ and 
$L_n>0$ and a measure lamination 
$\lambda \in \ML$, if $\frac{1}{L_n}X_n \to \lambda \in\CM\CL$ as a sequence of geodesic 
currents in $\CC(S)$, then 
\[
\frac{1}{L_n} \Phi_\mu(X_n) \to \lambda.
\]
This implies that the family of sets $\frac{1}{L} B_L$ converge point-wise to the set
\[
B_\ML(\gamma,1) = \Big\{ \lambda \in \ML(\mu)   \,\Big|\,  \I(\gamma, \lambda) \leq 1 \Big\}.
\]
That is, we have a family of closed convex sets that converge 
point-wise to a closed set. Then their volume converges as well. Altogether we get
$$\frac 1{L^{6g-6}}\vol_{\WP}(B_\CT (\gamma, L))=\frac 1{L^{6g-6}}\mu_{\Thu}(B_L)=\mu_{\Thu} \left (\frac 1L B_L \right)\longrightarrow B_\ML(\gamma,1).$$
And $m(\gamma)$ is nothing but the Thurston volume of $B_\ML(\gamma, 1)$ because $\ML(\mu)$ has full measure. 
\end{proof}

Mirzakhani derived Equation \eqref{Maryam} from the existence
of the limit \eqref{Volume-Maryam}. To obtain Theorem~\ref{Curve-Counting}
we need to show:

\begin{sat} For any filling curve $\gamma$, 
\begin{equation} \label{above}
\# \big\{ \phi \in \Map(\Sigma) \st \ell_X(\phi(\gamma)) \leq L \big\}
\sim \vol_{\WP} \big( B_\CT(\gamma, L) \big)\cdot \frac{m(X)}{\m_\Sigma},
\end{equation} 
where $\sim$ means that the ratio tends to $1$ when $L\to\infty$. 
\end{sat}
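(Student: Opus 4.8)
The plan is to reduce the statement to the identification of the unknown constant in Mirzakhani's asymptotic. By Equation \eqref{Maryam} there is a constant $n_\gamma$, depending on $\gamma$ but \emph{not} on $X$, with
\[
\lim_{L\to\infty}\frac{\#\{\phi\in\Map(\Sigma)\st \ell_X(\phi(\gamma))\le L\}}{L^{6g+2n-6}}=\frac{m(X)\,n_\gamma}{\m_\Sigma}
\qquad\text{for every }X\in\CT.
\]
On the other hand, Theorem~\ref{Volume} gives $\vol_{\WP}(B_\CT(\gamma,L))\sim m(\gamma)\,L^{6g+2n-6}$. Dividing the displayed asymptotic by $\vol_{\WP}(B_\CT(\gamma,L))\cdot m(X)/\m_\Sigma$ and passing to the limit, one sees that \eqref{above} (the ratio tending to $1$) is \emph{equivalent} to the single numerical identity $n_\gamma=m(\gamma)$, since $m(X)>0$ and $m(\gamma)>0$. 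So the whole proof comes down to computing $n_\gamma$, and the idea is to do this by integrating the pointwise asymptotic over moduli space and comparing with the volume growth of Theorem~\ref{Volume}.

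The first ingredient is an exact unfolding identity. Writing $A(X,L)=\#\{\phi\in\Map(\Sigma)\st \ell_X(\phi(\gamma))\le L\}$ and using the equivariance $\ell_X(\phi(\gamma))=\ell_{\phi^{-1}(X)}(\gamma)$, the function $X\mapsto A(X,L)$ is $\Map(\Sigma)$--invariant and hence descends to $\CM(\Sigma)$. Unfolding the sum over the group against the $\Map(\Sigma)$--invariant Weil--Petersson measure then yields
\[
\int_{\CM(\Sigma)}A(X,L)\,d\vol_{\WP}(X)=\int_{\CT(\Sigma)}\mathbf{1}_{[0,L]}\big(\ell_Y(\gamma)\big)\,d\vol_{\WP}(Y)=\vol_{\WP}\big(B_\CT(\gamma,L)\big).
\]
Here the finite stabilizers arising from the orbifold structure of $\CM(\Sigma)$ cause no trouble, as they affect only measure--zero loci; alternatively one works on a finite manifold cover and divides by the degree.

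With this in hand I would divide by $L^{6g+2n-6}$ and let $L\to\infty$. The right-hand side tends to $m(\gamma)$ by Theorem~\ref{Volume}. For the left-hand side the integrand converges pointwise to $n_\gamma\,m(X)/\m_\Sigma$ by Mirzakhani's asymptotic, and \emph{if} the limit may be taken inside the integral, then using $\int_{\CM}m(X)\,d\vol_{\WP}=\m_\Sigma$ the left-hand side tends to $n_\gamma$, giving $n_\gamma=m(\gamma)$ and hence \eqref{above}. Note that Fatou's lemma, applied to the nonnegative integrands, already delivers the one-sided bound $n_\gamma\le m(\gamma)$ for free.

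The main obstacle is justifying this interchange of limit and integral, i.e.\ upgrading Fatou's inequality to an equality, over the \emph{non-compact} moduli space: a priori mass in $A(X,L)/L^{6g+2n-6}$ could escape into the thin part as $L\to\infty$. I would rule this out by establishing a uniform upper bound of the form $A(X,L)\le C\,m(X)\,L^{6g+2n-6}$, valid for all $X\in\CT(\Sigma)$ and all large $L$ with $C$ independent of $X$ and $L$. Such a bound reflects that the orbit $\Map(\Sigma)\cdot\gamma$ behaves like a lattice of integral currents and that the number of its points in $\{\ell_X\le L\}$ is controlled by the Thurston volume $\mu_{\Thu}(\{\ell_X\le 1\})\,L^{6g+2n-6}=m(X)\,L^{6g+2n-6}$ uniformly in $X$, the filling hypothesis on $\gamma$ preventing $B_\CT(\gamma,L)$ from penetrating too far into the thin part. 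Since $\int_{\CM}m\,d\vol_{\WP}=\m_\Sigma<\infty$, dominated convergence then applies and yields $n_\gamma=m(\gamma)$, whence \eqref{above}.
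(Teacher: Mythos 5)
Your reduction is sound as far as it goes: granting Mirzakhani's asymptotic \eqref{Maryam} and Theorem~\ref{Volume}, the claim \eqref{above} is indeed equivalent to the single identity $n_\gamma=m(\gamma)$; the unfolding identity $\int_{\CM}A(X,L)\,d\vol_{\WP}=\vol_{\WP}(B_\CT(\gamma,L))$ for $A(X,L)=\#\{\phi\in\Map(\Sigma)\st\ell_X(\phi(\gamma))\le L\}$ is correct; and Fatou does deliver $n_\gamma\le m(\gamma)$ for free. Note that this is a genuinely different route from the paper's, which never integrates over moduli space: there one fixes $X$, decomposes the orbit according to the Fenchel--Nielsen cones $C_P^{\bf m}$, applies Mirzakhani's cone-wise asymptotic \eqref{cone} to each cone, and controls the tail of the sum over ${\bf m}$ via the bounds \eqref{L/m-vol} and \eqref{L/m-phi}, which are summable like $\prod_i m_i^{-2}$.

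However, the step you yourself flag as the main obstacle is a genuine gap, and it is exactly where the entire difficulty of the theorem lives. The asserted domination $A(X,L)\le C\,m(X)\,L^{6g+2n-6}$ with $C$ uniform over all of $\CT(\Sigma)$ is not proved, and it does not follow from any soft ``number of orbit points $\le$ constant times volume'' principle: for $X$ deep in the thin part the body $\{\lambda\st\ell_X(\lambda)\le 1\}$ becomes arbitrarily eccentric, and counts of a fixed discrete orbit in such elongated regions are not controlled by their Thurston volume with a uniform constant by any general argument; moreover $\Map(\Sigma)\cdot\gamma$ consists of non-simple curves, i.e.\ currents with positive self-intersection, so it is not even a subset of the integral points of $\ML$ to which a lattice-point heuristic might be applied. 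What your scheme actually requires is a no-escape-of-mass statement: for every $\delta>0$ a compact $\CK\subset\CM$ with $\limsup_{L}L^{-(6g+2n-6)}\int_{\CM\setminus\CK}A(X,L)\,d\vol_{\WP}<\delta$ (on a compact part of $\CM$ a dominating constant is available by comparing $\ell_X$ with $\ell_{X_0}$ for a fixed base point and invoking Mirzakhani's polynomial upper bound). By unfolding, this amounts to showing that the Weil--Petersson volume of the portion of $B_\CT(\gamma,L)$ lying over the thin part of moduli space is $o(L^{6g+2n-6})$ uniformly in $L$ --- a real estimate, and in effect the same tail control that the paper extracts from \eqref{L/m-vol} and \eqref{L/m-phi}. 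Until that estimate (or the claimed uniform domination) is supplied, your argument proves only the one-sided inequality $n_\gamma\le m(\gamma)$, not \eqref{above}.
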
 

\begin{proof}
This is essentially proven as part of proof of Theorem 1.1 in \cite{M} 
(see \cite[Section 9.4]{M}). Let 
$P= \{ \alpha_1, \dots, \alpha_{3g-3} \} $ be a pants decomposition 
of $\Sigma$ and consider the the associated Fenchel-Nielsen coordinates for 
\Teich space, namely 
\[
X \to \Big( \ell_1(X), \tau_1(X), \dots, \ell_{3g-3}(X), \tau_{3g-3}(X) \Big) . 
\]
where $\ell_i(X)$ is the length of $\alpha_i$ at $X$ and $\tau_i(X)$ is the amount of 
twisting around the curve $\alpha_i$. For ${\bf m} = (m_1, \dots, m_{3g-3}) \in \BZ^{3g-3}$ define 
$C_P^{\bf m}$ to be the cone in this coordinate consisting of all points $X \in \CT$ where 
\[
m_i \cdot \ell_i(X) \leq \tau_i(X) \leq (m_i+1) \cdot \ell_i(X).
\] 
Now, \cite[Equation 9.4]{M} states that 
\begin{equation} \label{cone}
\# \big\{  \phi \in \Map(\Sigma) \st 
  \phi(X) \in  C_P^{\bf m} \cap  B_\CT(\gamma, L)  \big\}
   \sim \vol_{\WP} \big( C_P^{\bf m} \cap  B_\CT(\gamma, L)  \big)
      \cdot \frac{m(X)}{\m_\Sigma}. 
\end{equation} 
That is, the theorem holds for every cone. However, we need infinitely many such cones 
to cover $B_\CT(\gamma, L)$ and we need to be a bit careful. 

As one applies larger and larger powers of the Dehn twist around the curve $\alpha_i$, 
the length of $\gamma$ (which has to intersect $\alpha_i$ because $\gamma$ is filling)
eventually becomes larger than $\tau_i(X) \cdot \ell_i(X)$. Therefore, there is a
constant $E_0=E_0(\gamma, X)$ depending on $\gamma$ and $X$ so that, for $L$ 
large enough, 
\[
\phi(X) \in  C_P^{\bf m} \cap  B_\CT(\gamma, L)
\quad\Longrightarrow \quad 
\ell_{\alpha_i}(\phi (X)) \leq E_0 \cdot \frac{L}{m_i}. 
\]
Using the fact that $\WP$--volume form can be written as
$d\ell_1 \cdots d\ell_{3g-3} d\tau_1 \cdots d\tau_{3g-3}$ and
setting $E_2 = E_0^{6g-6}$, we also have 
\begin{equation} \label{L/m-vol}
 \vol_{\WP} \big( C_P^{\bf m} \cap  B_\CT(\gamma, L)  \big) \leq
E_2 \frac{L^{6g-6}}{m_1^2 m_2^2 \cdots m_{3g-3}^2}. 
\end{equation}

Recall \cite[Lemma 5.1]{M} which states that, for some $E_1$ depending on $E_0$,
\[
\# \Big\{ \phi(P) \st
    \ell_{\phi(P)}(X) \leq L, \ \forall i \ \ell_{\phi(\alpha_i)}(X) \leq \frac{L}{m_i} \Big\}
    \leq E_1 \frac{L^{6g-6}}{m_1^2 m_2^2 \cdots m_{3g-3}^2}. 
\]
This can be restated as 
\begin{equation} \label{L/m-phi}
\# \big\{  \phi \in \Map(\Sigma) \st 
\phi(X) \in  C_P^{\bf m} \cap  B_\CT(\gamma, L)  \big\} \leq
E_1 \frac{L^{6g-6}}{m_1^2 m_2^2 \cdots m_{3g-3}^2},
\end{equation} 
because, for every $\phi$ in the first set, a composition $\phi^{-1}$ with the correct number of
Dehn twists around the curves $\alpha_i$ is in the second set and this gives a bijection between the two sets. But 
\[
\sum_{{\bf m} \in \BN^{3g-3}} \frac{1}{m_1^2} \times \frac{1}{m_2^2} \times \dots \times
\frac{1}{m_{3g-3}^2} \leq \infty. 
\] 
Therefore, for every $\epsilon$, there exists a finite collection of cones $C_P^{\bf m}$
so that the proportion of the contribution to both sides of 
Equation~\eqref{above} from all the other cones is less than $\epsilon$, independent 
of the value of $L$. Hence, Equation~\eqref{cone} implies Equation~\eqref{above}. 
\end{proof}

Armed with Theorem \ref{Curve-Counting}, we can obtain a special case of 
the Main Theorem where the function $f$ is given by intersection number and where, 
more crucially, the current is assumed to be a filling curve. This simpler statement
will be used later in the proof of the Main Theorem. 

\begin{kor}\label{kor-combined}
For every filling curve $\gamma$ in $\Sigma$ and for every filling current $\alpha\in\CC$ 
we have
$$
\lim_{L\to\infty} 
  \frac{\# \big\{\phi\in\Map(\Sigma) \st \I(\alpha,\phi(\gamma))\le L\big\} }{L^{6g-6}}=\frac{m(\gamma) m(\alpha) }{\m_g}.
$$
\end{kor}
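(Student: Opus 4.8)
The plan is to deduce the statement from Theorem~\ref{Curve-Counting} by repackaging \emph{both} counting problems as evaluations of a single sequence of measures. Set $d=6g+2n-6=\dim\ML$ and consider the normalized counting measures
\[
\nu_\gamma^L=\frac1{L^{d}}\sum_{\phi\in\Map(\Sigma)}\delta_{\frac1L\phi(\gamma)}\in\Measure(\CC).
\]
Since $\I(\alpha,\param)$ is homogeneous of degree one, $\#\{\phi\st\I(\alpha,\phi(\gamma))\le L\}=L^{d}\,\nu_\gamma^L(B_\alpha)$ with $B_\alpha=\{\eta\in\CC(\bSigma)\st\I(\alpha,\eta)\le1\}$, and likewise $\#\{\phi\st\ell_X(\phi(\gamma))\le L\}=L^{d}\,\nu_\gamma^L(B_X)$ with $B_X=\{\eta\st\ell_X(\eta)\le1\}$. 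Thus everything reduces to understanding the weak-$*$ limit of $\nu_\gamma^L$ and then evaluating it on $B_\alpha$.

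First I would show that $(\nu_\gamma^L)_L$ is precompact and that every subsequential limit is a multiple of the Thurston measure. The key simplification for a curve is that all translates $\phi(\gamma)$ lie in one set $\CC_{K'}(\bSigma)$ (since $\Map(\Sigma)\cdot\CC_K\subset\CC_{K'}$ for a compact $K\supset\gamma$), and $\CC_{K'}$ is a scaling-invariant cone, so every $\frac1L\phi(\gamma)$ stays in $\CC_{K'}$ as well. Inside $\CC_{K'}$ the balls $B_\alpha$ and $B_X$ are compact, by the compactness fact for filling currents together with the comparison between $\ell_X$ and $\I(\alpha,\param)$; since $\I(\alpha,\param)$ is proper on $\CC_{K'}$, the $\nu_\gamma^L$ are uniformly finite on compacta and hence precompact. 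Any subsequential limit $\nu$ is $\Map(\Sigma)$--invariant; normalized curves $\frac1L\phi(\gamma)$ accumulate only on measured laminations, so $\nu$ is supported on $\ML$, and a separate argument shows $\nu$ gives no mass to non-filling laminations. By Lindenstrauss--Mirzakhani's classification \cite{LM} of locally finite invariant measures on $\ML$ in the Lebesgue class, $\nu=c\cdot\mu_{\Thu}$ for some $c\ge 0$.

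To pin down $c$ and upgrade subsequential convergence to genuine convergence, I would test against a length ball. Because $\mu_{\Thu}(\{\ell_X=1\})=0$ (homogeneity of $\mu_{\Thu}$) and $B_X$ is compact in $\CC_{K'}$, the portmanteau principle gives $\nu_\gamma^L(B_X)\to c\,\mu_{\Thu}(B_X)=c\,m(X)$; on the other hand $\nu_\gamma^L(B_X)=L^{-d}\#\{\phi\st\ell_X(\phi(\gamma))\le L\}\to \frac{m(X)\,m(\gamma)}{\m_\Sigma}$ by Theorem~\ref{Curve-Counting}. Comparing for a single $X$ forces $c=m(\gamma)/\m_\Sigma$, independent of the subsequence, so the whole family converges: $\nu_\gamma^L\to\frac{m(\gamma)}{\m_\Sigma}\mu_{\Thu}$. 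Applying the identical portmanteau step to $B_\alpha$, using $\mu_{\Thu}(\{\I(\alpha,\param)=1\})=0$ and compactness of $B_\alpha$ in $\CC_{K'}$, yields
\[
\frac1{L^{d}}\#\{\phi\st\I(\alpha,\phi(\gamma))\le L\}=\nu_\gamma^L(B_\alpha)\longrightarrow \frac{m(\gamma)}{\m_\Sigma}\,\mu_{\Thu}(B_\alpha)=\frac{m(\gamma)\,m(\alpha)}{\m_\Sigma},
\]
which is the claim.

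The main obstacle is the structural input of the second paragraph: showing that every subsequential limit is supported on the \emph{filling} measured laminations, i.e.\ controlling both escape of mass into the non-compact directions of $\CC$ and concentration on the $\mu_{\Thu}$--null but topologically large set of non-filling laminations. Everything else is bookkeeping or a direct appeal to Theorem~\ref{Curve-Counting}, whose only role is to supply the exact multiplicative constant. I emphasize that a crude bi-Lipschitz squeeze of $\I(\alpha,\param)$ between scalar multiples of $\ell_X$ is \emph{not} enough here: it identifies the order of growth $L^{d}$ but loses the constant by a factor $C^{\pm d}$, so passing through the equidistribution statement $\nu_\gamma^L\to\frac{m(\gamma)}{\m_\Sigma}\mu_{\Thu}$ is essential.
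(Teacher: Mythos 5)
Your argument is correct in outline, but it takes a genuinely different route from the paper. The paper's proof of Corollary~\ref{kor-combined} is a two-line combination of citations: it invokes \cite[Corollary 4.4]{ES}, which states that for the Liouville current $X$ the ratio
\[
\frac{\# \big\{\phi \st \I(\alpha,\phi(\gamma))\le L\big\}}{\# \big\{\phi \st \I(X,\phi(\gamma))\le L\big\}}
\]
tends to $m(\alpha)/m(X)$, and then multiplies by the asymptotic for $\#\{\phi \st \ell_X(\phi(\gamma))\le L\}$ supplied by Theorem~\ref{Curve-Counting}, using $\I(X,\param)=\ell_X(\param)$. You instead reconstruct the equidistribution statement $\nu_\gamma^L\to\frac{m(\gamma)}{\m_\Sigma}\mu_{\Thu}$ for the orbit of the \emph{curve} $\gamma$ and then evaluate on the continuity sets $B_X$ and $B_\alpha$; this is essentially an unpacking of the content of the \cite{ES} citation, and it closely parallels the paper's own proof of Theorem~\ref{key} in Section~4 (precompactness, support on $\ML$, no mass on non-filling laminations, Lindenstrauss--Mirzakhani, then normalization). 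What your route buys is self-containedness and a clean template that generalizes to arbitrary filling currents; what the paper's route buys is brevity, at the cost of importing the hard analysis from \cite{ES}. Your observation that a bi-Lipschitz comparison of $\I(\alpha,\param)$ with $\ell_X$ only captures the growth rate and not the constant is exactly right, and your normalization step (testing against $B_X$ and comparing with Theorem~\ref{Curve-Counting}) correctly avoids any circularity with the paper's later use of this corollary inside the proof of Theorem~\ref{key}.

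The one substantive hole is the step you yourself flag: showing that every subsequential limit of $\nu_\gamma^L$ gives zero mass to the non-filling laminations $\{\lambda\in\ML \st \I(\lambda,\gamma')=0\}$, which is the hypothesis needed to apply Lindenstrauss--Mirzakhani. You assert that ``a separate argument'' handles this but do not give it, and for a general invariant limit measure this is genuinely the crux (the set in question is $\mu_{\Thu}$--null but supports plenty of invariant measures, e.g.\ ones concentrated on simple closed curves). For curve orbits this is precisely what \cite[Proposition 4.1]{ES} (restated as Proposition~\ref{multicurve-thurston} in the paper) provides, so your proof closes up if you cite that proposition; as written, however, the central analytic input is named but not proved.
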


\begin{proof} 
Recall, from \cite[Corollary 4.4]{ES}, that 
$$
\lim_{L\to\infty}\frac{\# \big\{\phi\in\Map(\Sigma) \st \I(\alpha,\phi(\gamma_0))\le L\big\}}
  {\#\big\{\phi\in\Map(\Sigma) \st \I(X,\phi(\gamma))\le L\big\}}
  =\frac{m(\alpha)}{m(X)}
$$
for any $X \in \CT$. The claim follows from this and Theorem \ref{Curve-Counting}. 
\end{proof}

\section{Limit of measures}
In this section we prove Theorem \ref{key}. Recall that $\Measure(\CC)$ is the space 
of Radon measures on $\CC=\CC(\bSigma)$, endowed with the weak-*-topology. Also, fix a filling compactly supported current $\alpha\in\CC_c(\bSigma)$ and recall the definition of the measures 
$$
\nu_{\alpha}^L=\frac 1{L^{6g+2n-6}}\sum_{\phi \in\Map(\Sigma)}
  \delta_{\frac 1L\phi(\alpha)}\in \Measure(\CC)
$$
from the introduction. The first step in the proof of Theorem \ref{key} is to show that the family of measures $\nu_\alpha^L$ is precompact, meaning that any sequence $L_n\to\infty$ has a subsequence $(L_{n_i})$ such that $(\nu_\alpha^{L_{n_i}})_i$ converges. 

\begin{lem}\label{lem-precompact}
The family $(\nu_\alpha^L)_L$ is precompact in $\Measure(\CC)$ and every accumulation point is locally finite and positive.
\end{lem}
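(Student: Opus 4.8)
The plan is to prove precompactness in $\Measure(\CC)$ by verifying the standard criterion: a family of Radon measures on a locally compact space is precompact in the weak-$*$ topology if and only if the total mass over every fixed compact set is uniformly bounded. So the first step is to fix an arbitrary compact set $\CK\subset\CC$ and bound $\sup_L\nu_\alpha^L(\CK)$ from above. Since $\nu_\alpha^L$ is a sum of point masses of weight $L^{-(6g+2n-6)}$ located at the points $\frac1L\phi(\alpha)$, the quantity $\nu_\alpha^L(\CK)$ equals $L^{-(6g+2n-6)}$ times the number of mapping classes $\phi$ with $\frac1L\phi(\alpha)\in\CK$, i.e.\ with $\phi(\alpha)\in L\cdot\CK$. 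Thus the whole task reduces to a counting estimate: the number of $\phi\in\Map(\Sigma)$ with $\phi(\alpha)\in L\cdot\CK$ is $O(L^{6g+2n-6})$ uniformly in $L$.

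The key idea for that counting estimate is to dominate membership in $\CK$ by an intersection-number bound, and then invoke Corollary~\ref{kor-combined} (or rather Theorem~\ref{Curve-Counting} and the comparison estimates in \S\ref{Sec:Prelim}). First I would choose a fixed filling curve $\gamma$ and a fixed compact set $K\subset\Sigma$ large enough that $\CK\subset\CC_K(\bSigma)$; this is possible because a compact subset of $\CC$ is contained in some $\CC_K$. Since $\gamma$ is filling and $\CK$ is compact, the continuous function $\eta\mapsto\I(\gamma,\eta)$ attains a finite maximum $M$ on $\CK$. Hence $\frac1L\phi(\alpha)\in\CK$ forces $\I(\gamma,\frac1L\phi(\alpha))\le M$, i.e.\ $\I(\gamma,\phi(\alpha))\le ML$. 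Using invariance of the intersection pairing under the mapping class group, $\I(\gamma,\phi(\alpha))=\I(\phi^{-1}(\gamma),\alpha)$, so this counts the $\phi$ with $\I(\alpha,\phi^{-1}(\gamma))\le ML$. By Corollary~\ref{kor-combined} this count is asymptotic to a constant times $(ML)^{6g-6}=O(L^{6g-6})$, which is $O(L^{6g+2n-6})$ (indeed better when $n>0$, but the crude bound suffices). Therefore $\nu_\alpha^L(\CK)\le L^{-(6g+2n-6)}\cdot O(L^{6g-6})$ stays bounded in $L$, giving precompactness.

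It remains to show that every accumulation point $\nu$ is locally finite and positive. Local finiteness is automatic from the uniform mass bound just established: passing to the limit, $\nu(\CK)\le\liminf_L\nu_\alpha^L(\CK)<\infty$ for compact $\CK$ by weak-$*$ lower semicontinuity of mass on open sets combined with the uniform upper bound, so $\nu$ is Radon. For positivity, I would exhibit at least one nonempty open set on which $\nu$ is positive. Choosing a filling current and using the third comparison bullet in \S\ref{Sec:Prelim}, intersection with $\alpha$ is comparable to intersection with any fixed filling reference, so the counting grows like $L^{6g+2n-6}$ in the \emph{closed} case and the limit measure cannot be identically zero; more carefully one shows that the lower count of $\phi$ with $\frac1L\phi(\alpha)$ in a suitable open set is also $\gtrsim L^{6g+2n-6}$, which forces $\nu$ to assign positive mass there.

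The main obstacle I anticipate is the exponent mismatch when $n>0$: Corollary~\ref{kor-combined} is stated with normalization $L^{6g-6}$ whereas the measures $\nu_\alpha^L$ are normalized by $L^{6g+2n-6}$, and one must take care that the relevant counting for \emph{compactly supported} currents on an open surface genuinely grows at the rate $L^{6g+2n-6}$ rather than the smaller $L^{6g-6}$. For the upper bound used in precompactness this discrepancy is harmless (it only makes the bound stronger), but for establishing \emph{positivity} of the limit one needs the sharp lower growth rate $L^{6g+2n-6}$, which requires the finer counting appropriate to $\CC_c(\bSigma)$ rather than the closed-surface count; handling this lower bound carefully is the delicate point of the argument.
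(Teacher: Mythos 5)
Your precompactness argument is essentially the paper's: all the measures $\nu_\alpha^L$ are supported on a fixed $\CC_K(\bSigma)$, and one gets a uniform mass bound on the compact sets exhausting $\CC_K(\bSigma)$ by converting membership in a compact set into an intersection-number (or length) bound and invoking the curve-counting theorem. The paper uses $\ell_X$ and Theorem \ref{Curve-Counting} where you use $\I(\gamma,\cdot)$ and Corollary \ref{kor-combined}; this difference is immaterial. One small inaccuracy: a compact subset of $\CC(\bSigma)$ need not be contained in any $\CC_K(\bSigma)$ with $K\subset\Sigma$ compact (a boundary current is a counterexample), but you never actually use that containment -- boundedness of $\I(\gamma,\cdot)$ on $\CK$ is all you need, and the measures themselves live in a fixed $\CC_K(\bSigma)$ anyway. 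Your worry about the exponent is resolved by noting that Theorem \ref{Curve-Counting} is stated with the normalization $L^{6g+2n-6}$, so the count you are bounding really grows like $L^{6g+2n-6}$ (the $L^{6g-6}$ in Corollary \ref{kor-combined} is a normalization slip); for the upper bound this only matters up to a constant, and for the lower bound it is exactly what Theorem \ref{Curve-Counting} provides.

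The genuine gap is in the positivity step. You propose to exhibit a nonempty open set $U$ with $\liminf_L\nu_\alpha^L(U)>0$ and to conclude that $\nu(U)>0$. For weak-$*$ convergence of Radon measures the portmanteau inequalities go the other way: for relatively compact open $U$ one only gets $\nu(U)\le\liminf_L\nu_\alpha^L(U)$, while for compact $C$ one gets $\limsup_L\nu_\alpha^L(C)\le\nu(C)$. Hence a lower bound on the masses of an open set forces nothing about $\nu(U)$ from below (consider Dirac masses drifting to the frontier of $U$: each has mass $1$ in $U$ but the limit may charge only the frontier). The correct move -- and the one the paper makes -- is to bound $\liminf_L\nu_\alpha^L(C)$ from below for the \emph{compact} set $C=\big\{\lambda\in\CC_K(\bSigma)\st\ell_X(\lambda)\le 1\big\}$, using the reverse comparison $\I(\alpha,\cdot)\le C\cdot\I(\gamma_0,\cdot)$ together with Theorem \ref{Curve-Counting}, and then conclude $\nu(C)\ge\limsup_L\nu_\alpha^{L_n}(C)>0$. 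With that substitution your argument goes through and coincides with the paper's.
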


\begin{proof}
First of all, we fix some finite area hyperbolic structure $X$ on $\Sigma$. We also fix 
a filling curve $\gamma_0$ and some $C\ge 1$ with
\begin{equation}\label{eq1}
C^{-1}\cdot\I(\lambda,\gamma_0)\le\I(\lambda,\alpha)\le C\cdot\I(\lambda,\gamma_0)
\end{equation}
for every current $\lambda\in\CC_c(\bSigma)$. 

Note now that there is some compact set $K\subset\Sigma=\bSigma\setminus\D\bSigma$ with $\phi(\alpha)\in K$ for all $\phi\in\Map(\Sigma)$. It follows that $\nu_\alpha^L$ is supported on $\CC_K(\bSigma)$ for all $K$. This is a key fact because $\{\lambda\in\CC_K(\bSigma) \st \ell_X(\lambda)\le T\}$ is compact for all $T\ge 0$ and in the weak-*-topology on a compact space, the space of probability measures is compact. Therefore, precompactness of the family $(\nu_\alpha^L)_L$ follows once we prove that 
$$
 \limsup_{L\to\infty}\nu_\alpha^L\big(\{\lambda\in\CC_K(\bSigma) \st \ell_X(\lambda)\le T\}\big)<\infty
$$
for every $T\ge 0$. Let us compute:
\begin{align*}
\nu_\alpha^L\big( \{\lambda\in\CC_K(\bSigma)\st\ell_X(\lambda)\le T\} \big)
&=\frac 1{L^{6g+2n-6}}\#\left\{\phi\in\Map(\Sigma) 
     \, \Big| \, \ell_X\left(\frac 1L \phi(\alpha)\right)\le T\right\}\\
&=\frac 1{L^{6g+2n-6}}\#\big\{\phi\in\Map(\Sigma)
    \st \I(X,\phi(\alpha))\le T\cdot L\big\}\\
&=\frac 1{L^{6g+2n-6}}\#\big\{\phi\in\Map(\Sigma)
    \st\I(\phi^{-1}(X),\alpha)\le T\cdot L\big\}\\
&\le\frac 1{L^{6g+2n-6}}\# \big\{\phi\in\Map(\Sigma)
    \st\I(\phi^{-1}(X),\gamma_0)\le C\cdot T\cdot L\big\}\\
&=\frac 1{L^{6g+2n-6}}\#\big\{\phi\in\Map(\Sigma)
    \st\I(X,\phi(\gamma_0))\le C\cdot T\cdot L\big\}\\
&=\frac 1{L^{6g+2n-6}}\#\big\{\phi\in\Map(\Sigma)
    \st\ell_X(\phi(\gamma_0))\le C\cdot T\cdot L\big\}.
\end{align*}
Now, from Theorem \ref{Curve-Counting} we get that, when $L$ grows, the last quantity approaches the quantity 
\[
(C\cdot T)^{6g+2n-6} \frac{m(\gamma_0) m(X)}{\m_\Sigma}.
\] 
In particular, the function 
$L\to \nu_\alpha^L\big(\big\{\lambda\in\CC_K(\bSigma)\st\ell_X(\lambda)\le T\big\}\big)$
is bounded, as we needed to show. 

Note that we have also already proved that every accumulation point of $(\nu_\alpha^L)_L$ is locally finite. To prove that any such accumulation point is positive, it suffices to prove conversely that
$$
\liminf_{L\to\infty}\nu_\alpha^L
  \big(\big\{\lambda\in\CC_K(\bSigma) \st \ell_X(\lambda)\le 1\big\}\big)>0.
$$
A completely analogous calculation as above, using this time the second inequality of \eqref{eq1} implies that 
$$
\nu_\alpha^L(\big\{\lambda\in\CC \st \ell_X(\lambda)\le 1\big\})\ge 
\frac {\# \big\{\phi\in\Map(\Sigma) \st \ell_X(\phi(\gamma_0))\le K^{-1}\cdot L\big\}}{L^{6g+2n-6}}.
$$
The claim follows, once again, from Theorem \ref{Curve-Counting}.
\end{proof}

Lemma \ref{lem-precompact} asserts that $(\nu_\alpha^L)_L$ has accumulation points. Over the next few lemmas we will study such points. 

\begin{lem}\label{lem-ml}
Suppose that $L_n\to\infty$ is a sequence such that the limit 
\[
\nu=\lim_{n\to\infty}\nu_\alpha^{L_n}
\] 
exists in $\Measure(\CC)$. Then $\nu$ is supported by the subspace $\CM\CL\subset\CC$ of measured laminations.
\end{lem}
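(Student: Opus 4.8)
The plan is to exploit the fact that the self-intersection number of each atom $\frac1L\phi(\alpha)$ tends to zero uniformly in $\phi$, together with the characterization $\ML(\Sigma)=\big\{\lambda\in\CC_0(\bSigma)\st\I(\lambda,\lambda)=0\big\}$ of measured laminations among currents supported in the interior. Concretely, I would show that $\nu$ assigns zero mass to $\CC\setminus\ML$. Since $\nu$ is Radon, and hence inner regular, it suffices to verify that $\int_\CC\varphi\,d\nu=0$ for every nonnegative $\varphi\in C^0_c(\CC)$ whose support is disjoint from $\ML$.

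First I would record, exactly as in the proof of Lemma~\ref{lem-precompact}, that there is a single compact set $K\subset\Sigma=\bSigma\setminus\D\bSigma$ with $\phi(\alpha)\in\CC_K(\bSigma)$ for every $\phi\in\Map(\Sigma)$; enlarging $K$ I may also assume $\ML(\Sigma)\subset\CC_K(\bSigma)$. Since scaling a current does not alter its support, every atom $\frac1L\phi(\alpha)$ lies in the closed set $\CC_K(\bSigma)\subset\CC_0(\bSigma)$. Using homogeneity of $\I$ together with its $\Map(\Sigma)$-invariance, I would then compute
\[
\I\left(\tfrac1L\phi(\alpha),\tfrac1L\phi(\alpha)\right)=\tfrac1{L^2}\,\I(\phi(\alpha),\phi(\alpha))=\tfrac1{L^2}\,\I(\alpha,\alpha),
\]
which tends to $0$ as $L\to\infty$, uniformly over all $\phi$.

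Now fix $\varphi\ge 0$ in $C^0_c(\CC)$ with $\supp(\varphi)\cap\ML=\emptyset$. The set $S=\supp(\varphi)\cap\CC_K(\bSigma)$ is compact, and since $S\subset\CC_K(\bSigma)\subset\CC_0(\bSigma)$ avoids $\ML$, the continuous function $\eta\mapsto\I(\eta,\eta)$ is strictly positive on $S$; by compactness there is some $\epsilon>0$ with $\I(\eta,\eta)\ge\epsilon$ for all $\eta\in S$. Because each atom $\frac1L\phi(\alpha)$ lies in $\CC_K(\bSigma)$ yet has self-intersection $\frac1{L^2}\I(\alpha,\alpha)<\epsilon$ once $L$ is large, no atom can lie in $\supp(\varphi)$ for $L$ large. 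Hence $\int\varphi\,d\nu_\alpha^{L_n}=0$ for all large $n$, and weak-$*$ convergence yields $\int\varphi\,d\nu=0$. Feeding this into inner regularity finishes the argument: for any open $U$ disjoint from $\ML$ and any compact $C\subset U$, choosing $\varphi$ with $0\le\varphi\le 1$, $\varphi\equiv 1$ on $C$ and $\supp(\varphi)\subset U$ gives $\nu(C)\le\int\varphi\,d\nu=0$, so $\nu(U)=0$ and therefore $\supp(\nu)\subset\ML$.

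The step I expect to be the main obstacle is the passage from \emph{small self-intersection} to \emph{membership in $\ML$}. This is precisely why one must restrict to $\CC_K(\bSigma)$: a current charging $\D\bSigma$ could have vanishing self-intersection without being a measured lamination, and it is exactly this boundary phenomenon that makes $\CC_0(\bSigma)$ fail to be compact. The supporting technical point is therefore to confirm that $\CC_K(\bSigma)$ is closed in $\CC(\bSigma)$—so that $S$ is genuinely compact—which follows since $\CC_K(\bSigma)$ consists of currents supported on the closed set of geodesics projecting into $K$, a property preserved under weak-$*$ limits.
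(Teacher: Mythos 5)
Your proof is correct and follows essentially the same approach as the paper: both arguments rest on the identity $\I\bigl(\tfrac1L\phi(\alpha),\tfrac1L\phi(\alpha)\bigr)=\I(\alpha,\alpha)/L^2\to 0$ together with the characterization of $\ML$ inside $\CC_0(\bSigma)$ by vanishing self-intersection. The only difference is packaging --- the paper integrates $\I(\lambda,\lambda)$ against $\nu^L_\alpha$ over length-balls and lets the bound from Lemma~\ref{lem-precompact} kill the integral, whereas you use compactly supported test functions and a uniform positive lower bound for $\I(\eta,\eta)$ on compacta away from $\ML$; your version is, if anything, slightly more careful about the passage to the weak-$*$ limit and about why the limit measure must live in $\CC_K(\bSigma)\subset\CC_0(\bSigma)$.
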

\begin{proof}
Fix again an auxiliary hyperbolic structure $X\in\CT$. In order to prove that $\nu$ is supported by $\CM\CL$ it suffices to prove that
\begin{equation}\label{eq-tackymusic}
\int_{\{\lambda\in\CC\,\vert\,\ell_X(\lambda)\le L\}}\I(\lambda,\lambda)\, \nu(\lambda)=0
\end{equation}
for all $L$. Computing the corresponding quantity for $\nu_\alpha^L$ we get
\begin{align*}
\int_{\{\lambda\in\CC\,\vert\,\ell_X(\lambda)\le L\}}\I(\lambda,\lambda)\,\nu_\alpha^L(\lambda)
&=\frac 1{L^{6g+2n-6}}\sum_{\phi\in\Map(\Sigma),\ \ell_X(\phi(\alpha))\le L}\I\left(\frac{\phi(\alpha)}L,\frac{\phi(\alpha)}L\right)\\
&=\frac 1{L^{6g+2n-6}}\sum_{\phi\in\Map(\Sigma),\ \ell_X(\phi(\alpha))\le L}\frac {\I(\alpha,\alpha)}{L^2}\\
&=\frac{\# \big\{\phi\in\Map(\Sigma) \st \ell_X(\phi(\alpha))\le L\big\}}{L^{6g+2n-6}}
   \cdot\frac{\I(\alpha,\alpha)}{L^2}\\
&=\nu_\alpha^L\big(\big\{\lambda\in\CC \st\ell_X(\lambda)\le T\big\}\big)
    \cdot\frac{\I(\alpha,\alpha)}{L^2}
\end{align*}
where the second equality holds because of the homogeneity and invariance of the intersection pairing. The third equality holds because all the summands are identical and the fourth is just the definition of $\nu_\alpha^L$. Recall that, in the proof of Lemma \ref{lem-precompact}, we have shown
$$\limsup_{L\to\infty}\nu_\alpha^L(\{\lambda\in\CC\st \ell_X(\lambda)\le T\})<\infty.$$
Plugging this in the previous computation we deduce that
$$\lim_{L\to\infty}\int_{\{\lambda\in\CC\,\vert\,\ell_X(\lambda)\le L\}}\I(\lambda,\lambda)\nu_\alpha^L(\lambda)=0,$$
from where \eqref{eq-tackymusic} follows.
\end{proof}

We know at this point that every accumulation point $\nu$ of the family $(\nu_\alpha^L)_L$ 
is supported by the space a measured laminations. We will prove below that any such 
$\nu$ is a multiple of the Thurston measure. It is known \cite[Proposition 4.1]{ES} 
- and this is going to be important in the next Lemma - 
that this is the case if the filling current $\alpha$ is a filling curve:

\begin{prop}\label{multicurve-thurston}
Let $\eta$ be a filling multicurve and suppose that $L_n\to\infty$ is a sequence such that the limit $\nu=\lim_{n\to\infty}\nu_\eta^{L_n}$ exists in $\Measure(\CC)$. Then $\nu=c\cdot\mu_{\Thu}$ for some $c>0$. \qed 
\end{prop}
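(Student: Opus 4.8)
The plan is to identify $\nu$ as a positive, locally finite, $\Map(\Sigma)$--invariant measure on $\ML$, and then to invoke the Lindenstrauss--Mirzakhani classification, the only genuine work being to exclude the non-filling ergodic components.

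First I would record what is already available. Since $\eta$ is a filling multicurve it is in particular a filling compactly supported current, so Lemmas~\ref{lem-precompact} and~\ref{lem-ml} apply verbatim: the accumulation point $\nu$ is locally finite and positive, and is supported on $\ML\subset\CC$. Moreover $\nu$ is $\Map(\Sigma)$--invariant. Indeed each $\nu_\eta^L$ is invariant because the mapping class group acts on $\CC$ by homeomorphisms commuting with the scaling by $\frac1L$, and the defining sum runs over all of $\Map(\Sigma)$; invariance is a closed condition and hence passes to the weak-$*$ limit. Thus $\nu$ is a nonzero locally finite $\Map(\Sigma)$--invariant Radon measure on $\ML$.

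Next I would appeal to the classification of Lindenstrauss--Mirzakhani \cite{LM, Ham}: decomposing $\nu$ into ergodic components, each locally finite ergodic $\Map(\Sigma)$--invariant measure on $\ML$ is, up to scale, either the Thurston measure $\mu_{\Thu}$ or is supported on the set $N\subset\ML$ of non-filling measured laminations (those whose support fails to meet some essential simple closed curve). Consequently $\nu=c\cdot\mu_{\Thu}+\nu'$ with $c\ge 0$ and $\nu'$ a locally finite invariant measure carried by $N$. The proposition therefore reduces to the single assertion $\nu(N)=0$, after which positivity of $\nu$ forces $c>0$.

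The heart of the argument, and the step I expect to be the main obstacle, is showing $\nu(N)=0$. Writing $N=\bigcup_\gamma\{\lambda\in\ML\st\I(\lambda,\gamma)=0\}$ as a countable union over essential simple closed curves $\gamma$, and exhausting $\ML$ by the sets $\{\ell_X\le T\}$ for a fixed auxiliary $X\in\CT$, it suffices by countable subadditivity to bound $\nu\big(\{\lambda\st\I(\lambda,\gamma)=0,\ \ell_X(\lambda)\le T\}\big)$ for each $\gamma$ and $T$. I would estimate this from above by choosing, for small $\epsilon>0$, the open neighborhood $U_\epsilon=\{\lambda\st\I(\lambda,\gamma)<\epsilon,\ \ell_X(\lambda)<T+1\}$ and using weak-$*$ convergence together with a continuous cutoff between the closed set and $U_\epsilon$, which gives $\nu(\{\I(\param,\gamma)=0,\ \ell_X\le T\})\le\limsup_n\nu_\eta^{L_n}(U_\epsilon)$. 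By homogeneity of $\I$ and of $\ell_X$, the quantity $\nu_\eta^{L}(U_\epsilon)$ equals $L^{-(6g+2n-6)}$ times the number of $\phi\in\Map(\Sigma)$ with $\I(\phi(\eta),\gamma)<\epsilon L$ and $\ell_X(\phi(\eta))<(T+1)L$. The task is then to show that this normalized count tends to $0$ as first $L\to\infty$ and then $\epsilon\to 0$; this is the quantitative statement that the orbit points of the filling multicurve $\eta$ accumulate with negligible density near the positive-codimension non-filling locus, and it is exactly the estimate carried out in \cite[Proposition 4.1]{ES}. Once $\nu(N)=0$ is established we conclude $\nu'=0$ and hence $\nu=c\cdot\mu_{\Thu}$ with $c>0$, completing the proof.
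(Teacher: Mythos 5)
The paper does not actually prove Proposition \ref{multicurve-thurston}: the statement is given without proof because it is imported verbatim from \cite[Proposition 4.1]{ES}, the surrounding text saying only that ``it is known.'' Your outline is a faithful reconstruction of how such a proof must be organized --- local finiteness and positivity from Lemma \ref{lem-precompact}, support on $\ML$ from Lemma \ref{lem-ml}, $\Map(\Sigma)$--invariance passing to the weak-$*$ limit, the Lindenstrauss--Mirzakhani classification, and then the single remaining issue of killing the mass on the non-filling locus $N$ --- and it parallels the paper's own route to Lemma \ref{lem-thurston} for general currents. The reductions you perform (countable union over simple closed curves, exhaustion by $\{\ell_X\le T\}$, the upper bound via the open sets $U_\epsilon$ and a cutoff, homogeneity turning $\nu_\eta^L(U_\epsilon)$ into a normalized orbit count) are all correct.

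The gap is at the step you yourself flag as the heart of the matter. You need that $\#\big\{\phi\in\Map(\Sigma)\st \I(\phi(\eta),\gamma)<\epsilon L,\ \ell_X(\phi(\eta))<(T+1)L\big\}$ is at most $o_\epsilon(1)\cdot L^{6g+2n-6}$ uniformly in $L$, and for this you cite \cite[Proposition 4.1]{ES} --- the very statement being proved. As a self-contained argument this is circular, and nothing proved elsewhere in the paper can substitute: Lemma \ref{lem-full}, which establishes exactly this kind of estimate for a general filling current $\alpha$, does so by comparing $\nu_\alpha^L$ with $\nu_\eta^L$ for a filling multicurve $\eta$ and then invoking Proposition \ref{multicurve-thurston}, so borrowing it here would be circular as well. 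What is genuinely needed --- and what makes the multicurve case the base case of the whole scheme --- is an input special to multicurves: in \cite{ES} the orbit $\Map(\Sigma)\cdot\eta$ is controlled by a surgery/comparison argument reducing to simple multicurves, for which Mirzakhani's counting results show that the $\epsilon$--neighbourhood of $N$ intersected with $\{\ell_X\le TL\}$ contains only $O(\epsilon)\cdot L^{6g+2n-6}$ orbit points. Without some such a priori bound, uniform in $L$ and tending to $0$ with $\epsilon$, your argument does not close.
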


For now, we prove that, for a general filling current, any accumulation point of $(\nu_\alpha^L)$ when $L\to\infty$ gives measure $0$ to the set of laminations which fail to intersect some curve.

\begin{lem}\label{lem-full}
Suppose that $L_n\to\infty$ is a sequence such that the limit $\nu=\lim_{n\to\infty}\nu_\alpha^{L_n}$ exists. Then we have
$$\nu\big(\big\{\lambda\in\CM\CL\st\I(\lambda,\gamma)=0\big\}\big)=0$$
for every simple curve $\gamma\subset\Sigma$.
\end{lem}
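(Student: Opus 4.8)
The plan is to reduce, by a soft lower-/upper-semicontinuity argument, to a single uniform counting estimate, and then to settle that estimate by comparing the filling current $\alpha$ to a filling \emph{multicurve}, for which Proposition~\ref{multicurve-thurston} already identifies every limit as a multiple of $\mu_{\Thu}$. The whole point is that $\mu_{\Thu}$ charges no non-filling laminations.

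Fix again an auxiliary hyperbolic structure $X\in\CT$, write $Z_\gamma=\{\lambda\in\CM\CL\st\I(\lambda,\gamma)=0\}$, and fix $T>0$. The open set $U_\epsilon=\{\lambda\st\I(\lambda,\gamma)<\epsilon,\ \ell_X(\lambda)<T\}$ has compact closure inside $\{\ell_X\le T\}\cap\CC_K$ and contains $Z_\gamma\cap\{\ell_X<T\}$. Since $\nu_\alpha^{L_n}\to\nu$ vaguely, lower semicontinuity of mass on relatively compact open sets gives
\[
\nu\big(Z_\gamma\cap\{\ell_X<T\}\big)\le\nu(U_\epsilon)\le\liminf_n\nu_\alpha^{L_n}\big(\{\I(\cdot,\gamma)\le\epsilon,\ \ell_X\le T\}\big).
\]
Because every lamination has finite $\ell_X$, letting $T\to\infty$ reduces the lemma to showing that, for each fixed $T$, the right-hand side tends to $0$ as $\epsilon\to0$.

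Unwinding the definition and using homogeneity and mapping class group invariance of $\I$, the counted quantity equals $\frac{1}{L^{6g+2n-6}}\#\{\phi\in\Map(\Sigma)\st\I(\alpha,\phi^{-1}\gamma)\le\epsilon L,\ \ell_X(\phi(\alpha))\le TL\}$. Now fix a filling multicurve $\eta_0$. Since $\phi^{-1}\gamma$, $\phi^{-1}\eta_0$ and $\phi(\alpha)$ all remain in one uniform compact set $\CC_K$ (the invariance of compact support recalled in Section~\ref{Sec:Prelim}), the comparison estimates for intersection with filling currents, together with $C^{-1}\I(\eta_0,\cdot)\le\ell_X\le C\I(\eta_0,\cdot)$, allow me to replace both $\alpha$ and $\ell_X$ by $\eta_0$ at the cost of a fixed constant $C$. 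This produces an inclusion of index sets, hence
\[
\nu_\alpha^{L}\big(\{\I(\cdot,\gamma)\le\epsilon,\ \ell_X\le T\}\big)\le\nu_{\eta_0}^{L}\big(\{\I(\cdot,\gamma)\le C\epsilon,\ \I(\cdot,\eta_0)\le CT\}\big).
\]
The family $(\nu_{\eta_0}^L)_L$ is precompact by Lemma~\ref{lem-precompact}, and by Proposition~\ref{multicurve-thurston} each accumulation point is a multiple $c\cdot\mu_{\Thu}$ with $c$ bounded independently of the subsequence (again by Lemma~\ref{lem-precompact}). As the set $S_\epsilon=\{\I(\cdot,\gamma)\le C\epsilon,\ \I(\cdot,\eta_0)\le CT\}$ is compact, upper semicontinuity of mass on compact sets yields $\limsup_{L}\nu_{\eta_0}^{L}(S_\epsilon)\le c_{\max}\cdot\mu_{\Thu}(S_\epsilon)$. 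Finally $S_\epsilon$ decreases, as $\epsilon\to0$, to $Z_\gamma\cap\{\I(\cdot,\eta_0)\le CT\}$, which is $\mu_{\Thu}$-null because the laminations disjoint from a simple curve form a positive-codimension subset of $\ML$; continuity from above then forces $\mu_{\Thu}(S_\epsilon)\to0$, closing the reduction.

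I expect the genuine obstacle to be the comparison step: one must transfer \emph{both} the smallness of $\I(\cdot,\gamma)$ and the length bound from $\alpha$ to the multicurve $\eta_0$ simultaneously, which hinges on verifying that all the currents entering the intersection numbers lie in a single $\CC_K$ so that the bilipschitz comparison applies with a uniform constant. Once this reduction to the filling-multicurve case is secured, the conclusion is immediate from Proposition~\ref{multicurve-thurston} and the vanishing $\mu_{\Thu}(Z_\gamma)=0$.
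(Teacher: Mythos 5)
Your proposal is correct and follows essentially the same route as the paper: bound $\nu_\alpha^L$ of the set where $\I(\cdot,\gamma)$ is small by the corresponding mass of $\nu_{\eta_0}^L$ for a filling (multi)curve $\eta_0$ via a uniform bilipschitz comparison on a fixed $\CC_K$, invoke Proposition~\ref{multicurve-thurston} to identify the limits as multiples of $\mu_{\Thu}$, and conclude from the fact that $\mu_{\Thu}$ gives no mass to laminations disjoint from a simple closed curve. Your write-up is merely more explicit about the portmanteau semicontinuity steps and the uniform bound on the constant $c$, which the paper leaves implicit.
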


\begin{proof}
Start by fixing an auxiliary hyperbolic structure $X\in\CT(\Sigma)$ and some $C\ge 1$ with 
$$C^{-1}\cdot\I(X,\lambda)\le\I(\alpha,\lambda)\le C\cdot\I(X,\lambda)$$
for every current $\lambda\in\CC$. The claim of the lemma follows when we show that for all $T>0$ we have
\begin{equation}\label{eq-bigsupport}
\lim_{\epsilon\to 0}\limsup_{L\to\infty}\nu_\alpha^L
  \big(\big\{\lambda\in\CC\st\I(\lambda,\gamma)\le\epsilon T
     \text{ and }\ell_X(\lambda)\le T\big\}\big)=0.
\end{equation}
Before launching the proof of \eqref{eq-bigsupport}, choose a filling curve $\eta$ and note that, up to increasing $K$ just this time only, we can further assume that 
$$K^{-1}\cdot\I(\eta,\lambda)\le\I(\alpha,\lambda)\le K\cdot\I(\eta,\lambda)$$
holds again for every current $\lambda\in\CC$. 

Computing basically as above we get the following:
\begin{align*}
\nu_\alpha^L\big(\big\{\lambda&\in\CC\st\I(\lambda,\gamma)\le\epsilon T
  \text{ and }\ell_X(\lambda)\le T\big\}\big)\\
&=\frac 1{L^{6g+2n-6}}\# \big\{\phi\in\Map(\Sigma)\st \I(\phi(\alpha),\gamma)\le\epsilon TL,\    
    \ell_X(\phi(\alpha))\le TL\big\}\\
&\le\frac 1{L^{6g+2n-6}}\#\big\{\phi\in\Map(\Sigma)\st\I(\phi(\eta),\gamma)\le\epsilon TLK,\ 
   \ell_X(\phi(\eta))\le TLK\big\}\\
&=\nu_\eta^L(\{\lambda\in\CC\st\I(\lambda,\gamma)\le\epsilon TK\text{ and }\ell_X(\lambda)\le TK\}).
\end{align*}
From Proposition \ref{multicurve-thurston} we get then that any limit of $\nu_\eta^L$ is a multiple of the Thurston measure. In other words, there is $c$ with
\begin{align*}
\lim\nu_\alpha^L\big(\big\{\lambda&\in\CC\st\I(\lambda,\gamma)\le\epsilon T
   \text{ and }\ell_X(\lambda)\le T\big\}\big)\le \\
&\le c\cdot\mu_{\Thu}\big(\big\{\lambda\in\CC\st\I(\lambda,\gamma)\le\epsilon TK
   \text{ and }\ell_X(\lambda)\le TK\big\}\big)
\end{align*}
for all $L$. Since $\mu_{\Thu}$ gives vanishing measure to the set of those laminations which do not intersect $\gamma$ we get that this last quantity tends to $0$ as $\epsilon\to 0$. We have proved \eqref{eq-bigsupport} and thus Lemma \ref{lem-full}.
\end{proof}

We recall that Lindenstrauss and Mirzakhani \cite{LM} have 
classified all mapping class group invariant measures on $\ML$
(see also \cite{Ham}). 
The following characterisation of the Thurston measure is just a reformulation of 
Theorem 7.1 in their paper:

\begin{sat*}[Lindenstrauss-Mirzakhani]
Let $\nu$ be a locally finite $\Map(\Sigma)$-invariant measure on $\CM\CL(\Sigma)$ with
$$\nu\big(\big\{\lambda\in\CM\CL(\Sigma)\st\I(\gamma,\lambda)=0\big\}\big)=0$$
for every simple closed curve $\gamma\subset\Sigma$. Then $\nu$ is a multiple of the Thurston measure $\mu_{\Thu}$.
\end{sat*}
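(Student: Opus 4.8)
The plan is to reduce the statement to the Lindenstrauss--Mirzakhani classification of \emph{all} locally finite $\Map(\Sigma)$-invariant measures on $\ML(\Sigma)$, and then to check that the intersection hypothesis rules out every invariant measure other than $\mu_{\Thu}$. The first and cleanest step is to reinterpret the hypothesis. There are only countably many isotopy classes of essential simple closed curves $\gamma\subset\Sigma$, and $\I(\gamma,\param)$ depends only on this class; moreover a measured lamination $\lambda\in\ML(\Sigma)$ fails to be filling precisely when $\I(\gamma,\lambda)=0$ for some such $\gamma$. Hence the set of non-filling laminations is the countable union
\[
\bigcup_{\gamma}\big\{\lambda\in\ML(\Sigma)\st\I(\gamma,\lambda)=0\big\},
\]
and since each term is $\nu$-null by hypothesis, the hypothesis says nothing more nor less than that $\nu$ gives measure zero to the set of all non-filling laminations.

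Next I would invoke the classification itself. Lindenstrauss and Mirzakhani show that, up to scaling, every ergodic locally finite $\Map(\Sigma)$-invariant measure on $\ML(\Sigma)$ is either the Thurston measure $\mu_{\Thu}$ or is supported on laminations containing an essential simple closed multicurve as a sublamination -- concretely, laminations of the form $\gamma'+\mu$ with $\gamma'$ a multicurve and $\mu$ carried by the complement of $\gamma'$. In particular every such non-Thurston ergodic measure is supported on non-filling laminations, since any component $\delta$ of $\gamma'$ is an essential simple closed curve with $\I(\delta,\gamma'+\mu)=0$. Writing $\nu$ as an integral of its ergodic components $\mu_\omega$ (using the ergodic decomposition for $\sigma$-finite invariant measures of the countable group $\Map(\Sigma)$) and recalling from the first step that the non-filling laminations form a $\nu$-null set, we get $\mu_\omega(\{\text{non-filling}\})=0$ for almost every $\omega$. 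A component of the second type would be \emph{entirely} supported on non-filling laminations, so almost every ergodic component must be a multiple of $\mu_{\Thu}$, whence $\nu=c\cdot\mu_{\Thu}$.

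The substantive content -- and the only place where the hypothesis is used -- is the translation in the first paragraph: the single-curve condition, summed over the countably many simple closed curves, is exactly the statement that $\nu$ ignores the non-filling laminations on which all the exotic invariant measures of Lindenstrauss--Mirzakhani are supported. The main technical care is needed in justifying the ergodic decomposition for a genuinely infinite (though $\sigma$-finite and locally finite) measure, and in quoting the classification in the precise form above rather than merely for finite measures; alternatively one can bypass the decomposition entirely and apply their Theorem~7.1 directly to $\nu$, provided it is stated for arbitrary, not necessarily ergodic, locally finite invariant measures.
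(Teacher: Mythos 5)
The paper does not prove this statement at all: it is quoted verbatim as ``just a reformulation of Theorem 7.1'' of Lindenstrauss--Mirzakhani \cite{LM}, so there is no internal proof to compare yours against. What you have written is instead a re-derivation of that theorem from the \emph{ergodic} classification in \cite{LM}, and as such it is essentially sound. Your translation of the hypothesis is correct: there are countably many isotopy classes of essential simple closed curves, a measured lamination is non-filling exactly when it has zero intersection number with one of them (this covers both the case of a curve missing $\lambda$ and the case of a closed leaf of $\lambda$), so the hypothesis is precisely $\nu(\{\text{non-filling}\})=0$; and every non-Thurston ergodic measure in the Lindenstrauss--Mirzakhani list is carried by laminations containing a simple closed curve as an atomic leaf (the complete-pair condition forces the boundary of the subsurface carrying the continuous part to appear in the multicurve), hence by non-filling laminations. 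The one genuine gap you correctly flag is the ergodic decomposition for a locally finite but infinite $\Map(\Sigma)$-invariant measure: this is not automatic and is exactly the technical content that \cite{LM} packages into their Theorem 7.1, which is stated for arbitrary (not necessarily ergodic) locally finite invariant measures. So your ``alternative'' of citing Theorem 7.1 directly is not really an alternative -- it is what the paper does, and it is the cleanest way to close the argument; if you insist on going through the ergodic decomposition you must either supply a reference valid for $\sigma$-finite invariant measures of countable group actions on standard Borel spaces or prove it, and you must also check that almost every ergodic component is again locally finite before the classification applies to it.
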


We are now ready to prove that Proposition \ref{multicurve-thurston} also holds for arbitrary filling currents $\alpha$. In other words, we prove that any accumulation point of $(\nu_\alpha^L)$ is a multiple of the Thurston measure:

\begin{lem}\label{lem-thurston}
Suppose that $L_n\to\infty$ is a sequence such that the limit $\nu=\lim_{n\to\infty}\nu_\alpha^{L_n}$ exists. Then $\nu=c\cdot\mu_{\Thu}$ for some $c>0$.
\end{lem}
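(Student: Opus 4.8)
The plan is to verify the three hypotheses of the Lindenstrauss--Mirzakhani theorem for the measure $\nu$ and then invoke it directly. By Lemma~\ref{lem-ml} we already know that $\nu$ is supported on $\CM\CL\subset\CC$, so it makes sense to regard $\nu$ as a measure on $\CM\CL(\Sigma)$. There are then two remaining ingredients: local finiteness, and the vanishing condition on laminations that fail to intersect a given simple closed curve. The latter is exactly the content of Lemma~\ref{lem-full}, which tells us that $\nu\big(\{\lambda\in\CM\CL\st\I(\lambda,\gamma)=0\}\big)=0$ for every simple closed curve $\gamma$. Local finiteness was already established in Lemma~\ref{lem-precompact}, which asserts that every accumulation point of the family $(\nu_\alpha^L)_L$ is locally finite (and positive). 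Thus all three hypotheses are in hand.

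The one genuinely new point I would need to address is the $\Map(\Sigma)$-invariance of $\nu$, which is a hypothesis of the Lindenstrauss--Mirzakhani theorem but has not been verified in the preceding lemmas. First I would observe that each approximating measure $\nu_\alpha^L$ is itself mapping class group invariant: for $\psi\in\Map(\Sigma)$, pushing forward the sum $\frac 1{L^{6g+2n-6}}\sum_{\phi}\delta_{\frac 1L\phi(\alpha)}$ by $\psi$ simply reindexes the orbit sum via $\phi\mapsto\psi\circ\phi$, leaving the measure unchanged. Since the $\Map(\Sigma)$-action on $\CC$ is by homeomorphisms and push-forward is continuous in the weak-$*$-topology, invariance passes to the limit $\nu=\lim_n\nu_\alpha^{L_n}$. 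Concretely, for any $f\in C^0_c(\CC)$ and any $\psi\in\Map(\Sigma)$ we have $\int f\circ\psi\,\nu_\alpha^{L_n}=\int f\,\nu_\alpha^{L_n}$ for every $n$, and letting $n\to\infty$ gives $\int f\circ\psi\,\nu=\int f\,\nu$, so $\psi_*\nu=\nu$.

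With invariance in place, $\nu$ is a locally finite $\Map(\Sigma)$-invariant measure on $\CM\CL(\Sigma)$ satisfying the vanishing hypothesis, so Lindenstrauss--Mirzakhani applies and yields $\nu=c\cdot\mu_{\Thu}$ for some constant $c\ge 0$. To conclude $c>0$ rather than merely $c\ge 0$, I would appeal to the positivity statement in Lemma~\ref{lem-precompact}: since $\nu$ is a positive measure (the computation there gives a strictly positive lower bound, via $\liminf_{L\to\infty}\nu_\alpha^L(\{\ell_X\le 1\})>0$), it cannot be the zero measure, forcing $c>0$.

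I expect the verification of invariance to be the only nontrivial step, though it is routine once one notes that push-forward commutes with the weak-$*$-limit; the substantive analytic work was already carried out in the earlier lemmas. The main conceptual point is simply recognizing that the three hypotheses of the classification theorem have each been arranged by a previous lemma, so that the proof reduces to assembling them and ruling out the degenerate case $c=0$.
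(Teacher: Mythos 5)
Your proposal is correct and follows essentially the same route as the paper: both assemble the hypotheses of the Lindenstrauss--Mirzakhani theorem from Lemmas~\ref{lem-precompact}, \ref{lem-ml} and \ref{lem-full}, note that $\Map(\Sigma)$-invariance of each $\nu_\alpha^{L_n}$ passes to the weak-$*$ limit, and use positivity to rule out $c=0$. Your write-up merely spells out the invariance and positivity steps in slightly more detail than the paper does.
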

\begin{proof}
Note that the limiting measure $\nu$ is positive and locally finite by Lemma \ref{lem-precompact}. Moreover, $\nu$ is supported by $\CM\CL$ by Lemma \ref{lem-ml}. Also, since all the measures $\nu_\alpha^{L_n}$ are $\Map(\Sigma)$-invariant we get that $\nu$ is $\Map(\Sigma)$-invariant as well. Finally, by Lemma \ref{lem-full} we have
$$\nu\big(\big\{\lambda\in\CM\CL\st\I(\lambda,\gamma)=0\big\}\big)=0$$
for every simple curve $\gamma\subset\Sigma$. The claim follows from the 
Lindenstrauss-Mirzakhani theorem.
\end{proof}

We are now ready to prove Theorem \ref{key}:

\begin{proof}[Proof of Theorem \ref{key}]
By Lemma \ref{lem-precompact} we know that the any sequence $L_n\to\infty$ has a subsequence $(L_{n_i})$ such that the limit 
$$\nu=\lim_{n\to\infty}\nu_\alpha^{L_n}$$ 
exists. Then, by Lemma \ref{lem-thurston} we know that $\nu=c\cdot\mu_{\Thu}$ for some constant $c=c(L_{n_i})$ depending on the sequence $(L_{n_i})$. To prove 
the theorem, it suffices to show that 
\begin{equation} \label{c}
c = \frac{m(\alpha)}{\m_g},
\end{equation}
which implies in particular that $c$ is independent of the sequence $L_{n_i}$. 
To that end fix a curve $\gamma$ and compute:
\begin{align*}
\nu\big(\big\{\lambda\in\CC\st\I(\lambda,\gamma)\le 1\big\}\big)
&=\lim_{i\to\infty}\nu_\alpha^{L_{n_i}}
  \big(\big\{\lambda\in\CC\st\I(\lambda,\gamma)\le 1\big\}\big)\\
&=\lim_{i\to\infty}\frac 1{L_{n_i}^{6g+2n-6}}
  \# \big\{\phi\in\Map(\Sigma)\st  \I(\phi(\alpha),\gamma)\le L_{n_i}\big\}\\
&=\lim_{i\to\infty}\frac 1{L_{n_i}^{6g+2n-6}}
  \# \big\{\phi\in\Map(\Sigma)\st \I(\alpha,\phi(\gamma))\le L_{n_i}\big\}\\
&=\frac{m(\gamma)m(\alpha)}{\m_g},
\end{align*}
where the last line holds by Corollary \ref{kor-combined}. Since $\nu = c\cdot\mu_{\Thu}$
we have
$$
\nu \big(\big\{\lambda\in\CC\st\I(\lambda,\gamma)\le 1\big\}\big)=c\cdot m(\gamma),
$$
meaning that 
\[
c\cdot m(\gamma) = \frac{m(\gamma)m(\alpha)}{\m_g}. 
\]
Equation \eqref{c} follows and we have proved Theorem \ref{key}.
\end{proof}

\section{Proof of the Main Theorem}

We restate the Main Theorem from the introduction. 

\begin{named}{Main Theorem}
Let $\bSigma$ be a compact surface of genus $g$ and $n$ boundary components, and let $\Sigma=\bSigma\setminus\D\Sigma$ be its interior. For a positive, continuous and homogeneous map $f \from \CC(\bSigma) \to \BR_+$ and a filling compactly supported current $\alpha \in \CC_c(\bSigma)$, we have
\[
  \lim_{L\to \infty} 
  \frac{ \# \big\{ \phi \in \Map(\Sigma) \st f\big(\phi(\alpha)\big) \leq L \big\}}{L^{6g+2n-6}}= 
  \frac{m(\alpha) \, m(f)}{\m_\Sigma}. 
\]
\end{named}

\begin{proof}
For $L>0$, consider again the measures
\[
\nu_{\alpha}^L=\frac 1{L^{6g+2n-6}}
  \sum_{\phi\in\Map(\Sigma)} \delta_{\frac 1L \phi(\alpha)},
\]
recall that they are supported by $\CC_K(\bSigma)$ for some $K\subset\Sigma$ compact, and note that for all $L$ one has
\begin{align*}
\#\big\{\phi\in\Map(\Sigma)\st  f(\phi(\alpha)) \le L\big\}
  &=\left(\sum_{\phi\in\Map(\Sigma)}\delta_{\phi(\alpha)}\right)
    \left(\big\{\eta \in \CC_K(\bSigma) \st  f(\eta)\le L \big\}\right)\\
&=\left(\sum_{\phi\in\Map(\Sigma)} \delta_{\frac 1L\phi(\alpha)}\right)
    \left(\big\{\eta\in\CC_K(\bSigma) \st  f(\eta)\le 1\big\}\right)\\
&=L^{6g+2n-6}\cdot \nu^L_{\alpha} \big(\big\{\eta\in\CC_K(\bSigma) \st  f(\eta)\le 1\big\}\big).
\end{align*}
By Theorem \ref{key}, the limit of $\nu_\alpha^L$ as $L$ goes to infinity is 
$\frac{m(\alpha)}{\m_g} \mu_{\Thu}$. We have
\begin{align*}
\lim_{L\to\infty}\frac  {\# \{\phi\in\Map(\Sigma) \st f(\phi(\alpha))\le L\}} {L^{6g+2n-6}}
 & = \frac{m(\alpha)}{\m_g} \mu_{\Thu}  \big(\big\{\eta\in\CC_K(\bSigma) \st  f(\eta)\le 1\big\}\big)\\
 & =\frac{m(\alpha) m(f) }{\m_g}. 
\end{align*}
This finishes the proof. 
\end{proof}

\end{document}